\newtheorem{theorem}{Theorem}
\newtheorem{lemma}[theorem]{Lemma}
\newtheorem{corollary}[theorem]{Corollary}
\newtheorem{proposition}[theorem]{Proposition}
\theoremstyle{definition}
\newtheorem{definition}[theorem]{Definition}
\newtheoremstyle{TheoremNum}
{\topsep}{\topsep}              
{\itshape}                      
{}                              
{\bfseries}                     
{.}                             
{ }                             
{\thmname{#1}\thmnote{ \bfseries #3}}
\newtheorem{remark}{Remark}
\newcommand{\cD}{\mathcal D}
\newcommand{\F}{\mathbb F}
\newcommand{\Z}{\mathbb Z}
\newcommand{\K}{\mathbb K}
\newcommand{\cG}{\mathcal G}
\newcommand{\cC}{\mathcal C}
\newcommand{\bu}{\mathbf u}
\newcommand{\cH}{\mathcal H}
\newcommand{\bbS}{\mathbb S}
\newcommand{\Aut}{\mathrm{Aut}}
\newcommand{\GL}{\mathrm{GL}}
\newcommand{\PG}{\mathrm{PG}}
\newcommand{\Tr}{ \ensuremath{ \mathrm{Tr}}}
\newcommand{\lp}[2]{\mathscr{L}_{(#1,#2)}}
\newcommand{\RN}[1]{%
	\textup{\uppercase\expandafter{\romannumeral#1}}%
}
\begin{document}
%
\title{A new family of MRD codes in $\F_q^{2n\times2n}$ with\\ right and middle nuclei $\F_{q^n}$}


\author{\IEEEauthorblockN{Rocco Trombetti\IEEEauthorrefmark{1} and
Yue Zhou\IEEEauthorrefmark{2}}
\IEEEauthorblockA{\IEEEauthorrefmark{1}Dipartimento di Mathematica e Applicazioni ``R. Caccioppoli", Universit\`{a} degli \\Studi di Napoli ``Federico \RN{2}", I-80126 Napoli, Italy}
\IEEEauthorblockA{\IEEEauthorrefmark{2}College of Liberal Arts and Sciences, National University of Defense Technology, 410073 Changsha, China}
\thanks{This work is supported by the Research Project of MIUR (Italian Office for University and Research) ``Strutture Geometriche, Combinatoria e loro Applicazioni" 2012. Yue Zhou is supported by the National Natural Science Foundation of China (No.\ 11771451, 11531002). 
	
Corresponding author: Y. Zhou (email: yue.zhou.ovgu@gmail.com) 
	
Copyright (c) 2017 IEEE. Personal use of this material is permitted.}}

\markboth{Journal of \LaTeX\ Class Files,~Vol.~XX, No.~X, June~20XX}%
{Shell \MakeLowercase{\textit{et al.}}: Bare Demo of IEEEtran.cls for IEEE Transactions on Magnetics Journals}
%


\IEEEtitleabstractindextext{%
\begin{abstract}
	In this paper, we present a new family of maximum rank-distance (MRD for short) codes in $\F_q^{2n\times 2n}$ of minimum distance $2\leq d\leq 2n$. In particular, when $d=2n$, we can show that the corresponding semifield is exactly a Hughes-Kleinfeld semifield. The middle and right nuclei of these MRD codes are both equal to $\F_{q^n}$. We also prove that the MRD codes of minimum distance $2<d<2n$ in this family are inequivalent to all known ones. The equivalence between any two members of this new family is also determined.
\end{abstract}

\begin{IEEEkeywords}
rank-metric code, MRD code, semifield, Gabidulin code.
\end{IEEEkeywords}}

\maketitle

\IEEEdisplaynontitleabstractindextext

%
\IEEEpeerreviewmaketitle

\section{Introduction}\label{sec:intro}
Let $\K$ denote a field. The set of all $m\times n$ matrices over $\K$ forms a $\K$-vector space, which we denote by $\K^{m\times n}$. For $A,B\in \K^{m\times n}$, we define 
$$d(A,B)=\mathrm{rank}(A-B),$$
which is often called the \emph{rank metric} or the \emph{rank distance} on $\K^{m\times n}$.

A subset $\cC\subseteq \K^{m\times n}$ with respect to the rank metric is called a \emph{rank-metric code} or a \emph{rank-distance code}. If $\cC$ contains at least two elements, the \emph{minimum distance} of $\cC$ is given by
\[d(\cC)=\min_{A,B\in \cC, A\neq B} \{d(A,B)\}.\]
When $\cC$ is a $\K$-linear subspace of $\K^{m\times n}$, we say that $\cC$ is a $\K$-linear code and its dimension $\dim_{K}(\cC)$ is defined to be the dimension of $\cC$ as a subspace over $\K$. 

Let $\F_q$ denote the finite field of $q$ elements. For any $\cC\subseteq \F_q^{m\times n}$ with $d(\cC)=d$, it is well-known that 
$$\#\cC\le q^{\max\{m,n\}(\min\{m,n\}-d+1)},$$
which is the Singleton-like bound for the rank metric; see \cite{delsarte_bilinear_1978}. When equality holds, we call $\cC$ a \emph{maximum rank-distance} (MRD for short) code. More properties of MRD codes can be found in \cite{delsarte_bilinear_1978},~\cite{gabidulin_MRD_1985},~\cite{gadouleau_properties_2006},~\cite{morrison_equivalence_2014} and~\cite{ravagnani_rank-metric_2016}.

Rank-metric codes, in particular, MRD codes have been studied since the 1970s and have seen much interest in recent years due to a wide range of applications including storage systems~\cite{roth_1991_Maximum}, cryptosystems~\cite{gabidulin_public-key_1995}, spacetime codes~\cite{lusina_maximum_2003} and random linear network coding~\cite{koetter_coding_2008}. 

In finite geometry, there are several interesting structures including quasifields, semifields, and splitting dimensional dual hyperovals can be equivalently described as special types of rank-metric codes; see~\cite{dempwolff_dimensional_2014},~\cite{dempwolff_orthogonal_2015},~\cite{johnson_handbook_2007}, \cite{taniguchi_unified_2014} and the references therein. In particular, a finite quasifield corresponds to an MRD code in $\F_q^{n\times n}$ of minimum distance~$n$ and a finite semifield corresponds to such an MRD code that is a subgroup of $\F_q^{n\times n}$ (see~\cite{de_la_cruz_algebraic_2016} for the precise relationship). Many essentially different families of finite quasifields and semifields are known \cite{lavrauw_semifields_2011}, which yield many inequivalent MRD codes in $\K^{n\times n}$ of minimum distance~$n$. In contrast, it appears to be much more difficult to obtain inequivalent MRD codes in $\F_q^{n\times n}$ of minimum distance strictly less than $n$. For the relationship between MRD codes and other geometric objects such as linear sets and Segre varieties, we refer to \cite{lunardon_mrd-codes_2017}.

Besides quasifields, there are only a few known constructions of MRD codes in $\F_q^{n\times n}$. The first construction of MRD codes was given by Delsarte~\cite{delsarte_bilinear_1978}. This construction was later rediscovered by Gabidulin~\cite{gabidulin_MRD_1985} and generalized by Kshevetskiy and Gabidulin~\cite{kshevetskiy_new_2005}. Today this family is usually called the \emph{generalized Gabidulin codes}, sometimes it is also simply called the \emph{Gabidulin codes} (see Section~\ref{sec:pre}, for a precise definition). It is easy to show that a Gabidulin code is always $\F_{q^n}$-linear. Recently, another $\F_q$-linear family was found by Sheekey~\cite{sheekey_new_2016} and we often call them (generalized) twisted Gabidulin codes. This family has been further generalized into additive MRD codes by Otal and \"Ozbudak~\cite{otal_additive_2016}, who also constructed a family of non-additive MRD codes \cite{otal_non-additive_2018}. Given any $2\leq d \leq n$, all these constructions can provide us MRD codes of minimum distance $d$.

For MRD codes in $\F_q^{n\times n}$ of minimum distance $d=n-1$, there are a few more constructions. First, there is a nonlinear family constructed by Cossidente, Marino and Pavese~\cite{cossidente_non-linear_2016} and later generalized by Durante and Siciliano~\cite{durante_nonlinear_MRD_2017}. Besides this family, there are other constructions associated with maximum scattered linear sets over $\PG(1,q^6)$ and $\PG(1,q^8)$ presented recently in \cite{csajbok_newMRD_2017} and~\cite{csajbok_maximum_arxiv}. For more results concerning maximum scattered linear sets and associated MRD codes, see~\cite{bartoli_scattered_arxiv}, \cite{csajbok_classes_2018}, \cite{csajbok_equivalence_2016} and \cite{csajbok_maximum_4_2018}.

For MRD codes in $\F_q^{m\times n}$ with $m<n$, there are many different approaches to construct them. A canonical way to get them is puncturing (or projecting) MRD codes in $\F_{q}^{m'\times n}$ with $n\geq m'>m$. In \cite{horlemann-trautmann_new-criteria_2017}, a new criterion for the punctured Gabidulin codes is presented, and for small $m$ and $n$, several constructions of inequivalent MRD codes are obtained. In \cite{neri_genericity_2018}, it is presented a generic construction of MRD codes by using algebraic geometry approaches, under the condition that $n$ is large enough compared with $d$ and $m$. In \cite{csajbok_maximum_2017}, an approach to derive MRD codes in $\F_q^{m\times n}$ from linear sets is investigated. In \cite{donati_generalization_2017}, a nonlinear construction is presented. Recently, Schmidt and the second author~\cite{schmidt_number_MRD_2017} showed that even in Gabidulin codes there are a huge subset of inequivalent MRD codes.

In this paper, we present a new family of MRD codes in $\F_{q}^{2n\times 2n}$ of any minimum distance $d$ between $2$ and $2n$. In particular, when $d=2n$, we can show that the corresponding semifield is exactly the Hughes-Kleinfeld semifield~\cite{hughes_seminuclear_1960} found in 1960. Through the investigation of their middle and right nuclei, we can prove that the MRD codes in this new family are inequivalent to all known constructions.

The rest of this paper is organized as follows. In Section \ref{sec:pre}, we introduce semifields, describe rank-metric codes in $\F_{q}^{n\times n}$ via linearized polynomials and introduce the equivalence between rank-metric codes as well as their dual codes and adjoint codes. In Section \ref{sec:construction}, we present our new family of MRD codes and determine their middle and right nuclei. Based on these results, we show that they are inequivalent to all the known MRD codes except for one special case which is later excluded in Section \ref{sec:equivalence}. Another result in Section \ref{sec:equivalence} is the complete answer to the equivalence problem between different members of this new family.

\section{Preliminaries}\label{sec:pre}
Roughly speaking, a \emph{semifield} $\bbS$ is an algebraic structure satisfying all the axioms of a skewfield except (possibly) the associativity of its multiplication. A finite field is a trivial example of a semifield. Furthermore, if $\bbS$ does not necessarily have a multiplicative identity, then it is called a \emph{presemifield}.  For a presemifield $\bbS$, $(\bbS,+)$ is necessarily abelian \cite{knuth_finite_1965}.

The first family of non-trivial semifields was constructed by Dickson \cite{dickson_commutative_1906} more than a century ago. In \cite{knuth_finite_1965}, Knuth showed that the additive group of a finite semifield $\bbS$ is an elementary abelian group, and the additive order of the nonzero elements in $\bbS$ is called the \emph{characteristic} of $\bbS$. Hence, any finite semifield can be represented by $(\mathbb{F}_q, +, *)$, where $q$ is a power of a prime $p$. Here $(\mathbb{F}_q, +)$ is the additive group of the finite field $\mathbb{F}_q$ and $x*y$ can be written as $x*y=\sum_{i,j}a_{ij} x^{p^i}y^{p^j}$, which forms a map from $\mathbb{F}_q\times \mathbb{F}_q$ to $\mathbb{F}_q$. We refer to \cite{lavrauw_semifields_2011} for a recent and comprehensive survey on finite semifields.

Geometrically speaking, there is a well-known correspondence, via coordinatisation, between (pre)semifields and projective planes of Lenz-Barlotti type V.1, see \cite{dembowski_finite_1997,hughes_projective_1973}. The most important equivalence relation defined on (pre)semifields is the \emph{isotopism}.  Given two (pre)semifields $\bbS_1=(\mathbb{F}_p^n, +, *)$ and $\bbS_2=(\mathbb{F}_p^n, +, \star)$. If there exist three bijective linear mappings $L, M, N:\mathbb{F}_{p}^n\rightarrow \mathbb{F}_p^n$ such that
$$M(x)\star N(y)=L(x*y)$$
for any $x,y\in\mathbb{F}_p^n$, then $\bbS_1$ and $\bbS_2$ are called \emph{isotopic}, and the triple $(M,N,L)$ is called  an \emph{isotopism} between $\bbS_1$ and $\bbS_2$. In \cite{albert_finite_1960}, Albert showed that two (pre)semifields coordinatize isomorphic planes if and only if they are isotopic. Every presemifield can be normalized into a semifield under an appropriate isotopism; see~\cite{bierbrauer_projective_2016} and~\cite{lavrauw_semifields_2011}.

Given a semifield $\bbS$ with multiplication $*$, we define its left, middle and right nucleus by
\begin{align*}
N_l(\bbS) &:= \{a\in \bbS: a*(x* y) = (a* x) * y \text{ for all }x,y\in \bbS \},\\
N_m(\bbS) &:= \{a\in \bbS: x*(a* y) = (x* a) * y \text{ for all }x,y\in \bbS \},\\
N_r(\bbS) &:= \{a\in \bbS: x*(y* a) = (x* y) * a \text{ for all }x,y\in \bbS \}.
\end{align*}
It is not difficult to prove that the semifield $\bbS$ can be viewed as a left vector space over its left nucleus. In particular, when $\bbS$ is finite, we can further show $N_l(\bbS)$ is actually a finite field $\F_q$. Let us assume that $\bbS$ is of size $q^n$. For every $b\in \bbS$, the map $x\mapsto x*b$ defines an $n\times n$ matrix $M_b$ over 
its left nucleus $N_l(\bbS)$. Furthermore, all such matrices together form a rank metric code $\{M_b: b\in \bbS \}$ which is actually an MRD code, because the difference  between any two distinct members $M_b$ and $M_d$ in it equals $M_b-M_d=M_{b-d}$ which is always nonsingular. This MRD code is usually called the semifield spread set associated with $\bbS$; see \cite{lavrauw_semifields_2011}.

Next, let us turn to rank-metric codes. 
As we are working with rank-metric codes in $\F_q^{n\times n}$ rather than $\F_{q}^{m\times n}$ with $m<n$ in this paper, it is more convenient to describe such a rank-metric code  using the language of $q$-polynomials or linearized polynomials over $\F_{q^n}$ which are the polynomials in the set
\[\lp{n}{q}[X]=\left\{\sum c_i X^{q^i}: c_i\in \F_{q^n} \right\}.\]
In fact, there is a bijection between $\F_q^{n\times n}$ and $\lp{n}{q}[X]/(X^{q^n}-X)$; for more results about linearized polynomials, we refer to \cite{lidl_finite_1997}.

As we mentioned in the introduction part, the most well-known family of MRD codes is called (generalized) Gabidulin codes. They can be described by the following set $\cG_{k,s}$ of linearized polynomials
\[\small{\{a_0 x + a_1 x^{q^{s}} + \dots a_{k-1} x^{q^{s(k-1)}}: a_0,a_1,\dots, a_{k-1}\in \F_{q^n} \},}\]
where $s$ is relatively prime to $n$. It is obvious that there are $q^{kn}$ polynomials in $\cG_{k,s}$ and each polynomial in it has at most $q^{k-1}$ roots which means its minimum distance $d=n-k+1$. Hence its size meets the Singleton-like bound.

For $x\in \F_{q^{m}}$, let $N_{q^{m}/q}(x)$ denote the norm from $\F_{q^m}$ to $\F_q$, i.e.\ $N_{q^{m}/q}(x)=x^{1+q+\cdots q^{m-1}}$. The following result follows from \cite[Theorem 10]{gow_galois_linear_2009}.
\begin{lemma}\label{lm:gow}
	Let $s$ and $m$ be two relatively prime positive integers. Suppose that $f=f_0X+f_1X^{q^s} +\cdots+ f_kX^{q^{sk}}\in \F_{q^m}[X]$ is a linearized polynomial with $f_k\neq 0$. If $f$ has $q^k$ roots, then $N_{q^{sm}/q^s}(f_0)=(-1)^{km}N_{q^{sm}/q^s}(f_k)$.
\end{lemma}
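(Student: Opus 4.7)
The plan is to combine the ordinary polynomial factorization of $f$ over its splitting field with a direct norm calculation. Since $\gcd(s,m)=1$, the residues $\{si \bmod m : 0 \leq i \leq m-1\}$ run through $\{0,1,\dots,m-1\}$, so for every $a \in \F_{q^m}$ one has $N_{q^{sm}/q^s}(a) = N_{q^m/q}(a)$. It therefore suffices to prove
\[N_{q^m/q}(f_0) = (-1)^{km} N_{q^m/q}(f_k).\]

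First, I interpret the hypothesis ``$f$ has $q^k$ roots'' as: $f$ has $q^k$ roots inside $\F_{q^m}$, forming an $\F_q$-subspace $W$ of dimension $k$. Because $f$ is $\F_{q^s}$-linear, its kernel $V$ as an operator on $\F_{q^{sm}}$ contains the $\F_{q^s}$-span of $W$. Using the natural $\F_q$-algebra isomorphism $\F_{q^{sm}} \cong \F_{q^m}\otimes_{\F_q}\F_{q^s}$ (valid because $\gcd(s,m)=1$), any $\F_q$-basis of $W$ remains $\F_{q^s}$-linearly independent inside $\F_{q^{sm}}$, so $|\F_{q^s}\cdot W| = q^{sk}$. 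But $f$ has ordinary degree $q^{sk}$, hence at most $q^{sk}$ roots in any field, so $V = \F_{q^s}\cdot W$ has exactly $q^{sk}$ simple roots and we may factor
\[f(X) = f_k \prod_{v \in V}(X - v).\]

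Next, extracting the coefficient of $X$ (using $0 \in V$ and $(-1)^{|V|-1} = 1$ in every characteristic, since $|V|=q^{sk}$) yields
\[\frac{f_0}{f_k} = \prod_{v \in V\setminus\{0\}} v.\]
Finally, I compute the norm of the right-hand side by orbit-decomposing $V\setminus\{0\}$ under the scaling action of $\F_{q^s}^\times$: there are $(q^{sk}-1)/(q^s-1) = 1+q^s+\cdots+q^{s(k-1)}$ orbits, each of size $q^s-1$. Because $\prod_{c\in\F_{q^s}^\times}c = -1$ and $N_{q^{sm}/q^s}(v)^{q^s-1}=1$ for every $v\in\F_{q^{sm}}^\times$, each orbit contributes a factor of $(-1)^m$ to $\prod_{v\neq 0} N_{q^{sm}/q^s}(v)$. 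In odd characteristic, each $q^{sj}$ is odd, so the total exponent has the parity of $k$, giving $(-1)^{km}$; in characteristic $2$ both sides are trivially equal to $1$. Combining these steps yields $N_{q^m/q}(f_0) = (-1)^{km} N_{q^m/q}(f_k)$.

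The main obstacle is the opening reduction: using the $\F_{q^s}$-linearity of $f$ together with $\gcd(s,m)=1$ to upgrade ``$q^k$ roots in $\F_{q^m}$'' to ``$q^{sk}$ simple roots in $\F_{q^{sm}}$ forming a maximal $\F_{q^s}$-subspace''. This is where the coprimality hypothesis is really used, via the tensor-product description of $\F_{q^{sm}}$. Once that step is in place, the remainder is a routine coefficient extraction together with the elementary identity $\prod_{c\in\F_{q^s}^\times}c = -1$.
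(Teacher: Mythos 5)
Your argument is correct, but there is nothing in the paper to compare it against: the paper does not prove this lemma, it simply quotes it as a consequence of Theorem~10 of Gow and Quinlan (reference \cite{gow_galois_linear_2009}). So what you have produced is a self-contained, elementary replacement for that citation, and it holds up. The two genuinely nontrivial steps are exactly the ones you isolate: (i) the linear disjointness of $\F_{q^m}$ and $\F_{q^s}$ over $\F_q$ (which is where $\gcd(s,m)=1$ enters) upgrades the $q^k$ roots in $\F_{q^m}$ to an $\F_{q^s}$-subspace $V$ of $q^{sk}$ simple roots in $\F_{q^{sm}}$, forcing the split factorization $f=f_k\prod_{v\in V}(X-v)$ and hence $f_0/f_k=\prod_{v\in V\setminus\{0\}}v$; and (ii) the orbit computation under scaling by $\F_{q^s}^\times$, where $N_{q^{sm}/q^s}(c)=c^m$ for $c\in\F_{q^s}^\times$ yields a factor $(-1)^m$ per orbit, and the orbit count $1+q^s+\cdots+q^{s(k-1)}$ has the parity of $k$ in odd characteristic (with the characteristic-$2$ case trivial). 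Both are handled correctly, including the sign $(-1)^{|V|-1}=1$ in the coefficient extraction. Two small presentational points: write $\mathrm{span}_{\F_{q^s}}(W)$ rather than $\F_{q^s}\cdot W$, since the set of products $\{cv\}$ is not itself closed under addition and your cardinality count $q^{sk}$ only applies to the span; and the opening reduction $N_{q^{sm}/q^s}(a)=N_{q^m/q}(a)$ for $a\in\F_{q^m}$, while true and consistent with how the paper later uses the lemma, is not actually needed, since your computation works directly with $N_{q^{sm}/q^s}$ throughout.
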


In \cite{sheekey_new_2016}, Sheekey applied Lemma \ref{lm:gow} and found a new family of MRD codes $\cH_{k,s}(\eta, h)$ which equals
\[\small{\{a_0 x + a_1 x^{q^s} + \dots +a_{k-1} x^{q^{s(k-1)}} + \eta a_0^{q^h} x^{q^{sk}}: a_i\in \F_{q^m} \},}\]
where $\eta$ satisfies $N_{q^{sm}/q^s}(\eta)\neq (-1)^{km}$, i.e.\ $N_{q^{m}/q}(\eta)\neq (-1)^{km}$.
Such an MRD code is usually called a \emph{(generalized) twisted Gabidulin code}. 
It is clear that if we allow $\eta$ equal $0$, $\cG_{k,s}$ can be viewed as a subfamily of the twisted Gabidulin codes. Replacing the field automorphism $a_0\mapsto a_0^{q^h}$ in the coefficient of the last term of the elements in $\cH_{k,s}(\eta, h)$ by an automorphism in $\Aut(\F_{q^n})\setminus \Aut(\F_{q^n}/\F_q)$, Otal and \"Ozbudak~\cite{otal_additive_2016} generalized this family into an additive one.

There are several slightly different definitions of equivalence of rank-metric codes. In this paper, we use the following notion of equivalence.
\begin{definition}
	\label{def:equivalence}
	Two rank-metric codes $\cC_1$ and $\cC_2$ in $\K^{m\times n}$ are \emph{equivalent} if there exist $A\in\GL_m(\K)$, $B\in \GL_n(\K)$, $C\in\K^{m\times n}$ and $\rho\in\Aut(\K)$ such that 
	\begin{equation}\label{eq:def_equiv}
	\cC_2=\{AM^{\rho}B+C:M \in\cC_1\}.
	\end{equation}
	For $m=n$, if $\cC_2$ is equivalent to $\cC_1$ or $\cC_1^T := \{M^T: M\in \cC_1\}$
	where $(\,.\,)^T$ means transposition, then we say $\cC_1$ and $\cC_2$ are \emph{isometrically equivalent}. An equivalence map from a rank-metric code $\cC$ to itself is also called an \emph{automorphism} of $\cC$.
\end{definition}

When $\cC_1$ and $\cC_2$ are both additive and equivalent, it is not difficult to show that we can choose $C=0$ in \eqref{eq:def_equiv}. In particular, when $\cC_1$ and $\cC_2$ are semifield spread sets, they are equivalent if and only if the associated semifields are isotopic \cite[Theorem 7]{lavrauw_semifields_2011}.

Back to the descriptions in linearized polynomials, given two rank-metric codes $\cC_1$ and $\cC_2$ which consist of linearized polynomials, they are equivalent if there exist $\varphi_1$, $\varphi_2\in \lp{n}{q}[X]$ permuting $\F_{q^n}$, $\psi\in \lp{n}{q}[X]$ and $\rho\in \Aut(\F_q)$ such that
\[ \varphi_1\circ f^\rho \circ \varphi_2 + \psi\in \cC_2 \text{ for all }f\in \cC_1,\]
where $\circ$ stands for the composition of maps and $f^\rho= \sum a_i^\rho X^{q^i}$ for $f=\sum a_i X^{q^i}$.

In general, it is a difficult problem to tell whether two given rank-metric codes are equivalent or not. There are several invariants which may help us to distinguish them. 
Given a $\K$-linear rank-metric code $\cC\subseteq \K^{m\times n}$, its middle nucleus is defined as
\[N_m(\cC) =\{M\in\K^{m\times n} : MC\in \cC \text{ for all }C\in \cC  \},\]
and its right nucleus is defined as
\[N_r(\cC) =\{M\in\K^{m\times n} : CM\in \cC \text{ for all }C\in \cC  \}.\]
These two concepts were introduced in \cite{lunardon_kernels_2017} and they can be viewed as a natural generalization of the middle and right nucleus of semifields.
In \cite{liebhold_automorphism_2016}, they are called the left idealizer and the right idealizer of $\cC$, respectively. In general, we can also define the left nucleus of $\cC$.  However, for MRD codes over $\K$ containing singular matrices, it is always $\K$ which means it is not a useful invariant; see \cite{lunardon_kernels_2017}.

For a rank-metric code $\cC$ given by a set of linearized polynomials, its middle nucleus and right nucleus can also be written as sets of linearized polynomials. Precisely the middle nucleus of $\cC$ is
\[\mathcal N_m(\cC)= \{ \varphi \in \lp{n}{q}: f\circ \varphi\in \cC \text{ for all }f\in \cC \}.\]
It is defined by $f\circ \varphi$ rather than $\varphi\circ f$ because we always consider a row vector $\bu$ multiplying a matrix $C$ which is a member of a rank-metric code. This means that $M\in N_m(\cC)$ only if $\bu MC=\bu C'$ for some $C'\in \cC$.

Similarly, the right nucleus of $\cC$ is
\[\mathcal N_r(\cC)= \{ \varphi \in \lp{n}{q}: \varphi \circ f\in \cC \text{ for all }f\in \cC \}.\] 
They played an important role in \cite{schmidt_number_MRD_2017} proving a lower bound on the numbers of inequivalent Gabidulin codes in $\F_q^{m\times n}$. The middle and right nuclei of generalized twisted Gabidulin codes together with a complete answer to the equivalence between members in this family can be found in \cite{lunardon_generalized_2018}.

We define a symmetric bilinear form on the set $\F_q^{m\times n}$ by
\[\langle M,N\rangle:= \Tr(MN^T),\]
where $N^T$ is the transpose of $N$. The \emph{Delsarte dual code} of an $\F_q$-linear code $\cC$ is 
\[\cC^\perp :=\{M\in \F_q^{m\times n}:\langle M,N \rangle=0 \text{ for all } N\in \cC  \}.\]

One important result proved by Delsarte \cite{delsarte_bilinear_1978} is that the Delsarte dual code of a linear MRD code is still MRD. As we are considering MRD codes using linearized polynomials, the Delsarte dual can also be interpreted in the following way~\cite{sheekey_new_2016}.
We define the bilinear form $b$ on $q$-polynomials by
\[b\left( f,g \right)=\Tr_{q^n/q}\left(\sum_{i=0}^{n-1}a_ib_i\right),\]
where $f(x)=\sum_{i=0}^{n-1}a_ix^{q^i}$ and $g(x)=\sum_{i=0}^{n-1}b_ix^{q^i}\in \F_{q^n}[x]$. The \emph{Delsarte dual code} $\cC^\perp$ of a set of $q$-polynomials $\cC$ is
\[\cC^\perp=\{f: b(f,g)=0 \text{ for all } g\in \cC \} .\]
It is well-known and also not difficult to show directly that two linear rank-metric codes are equivalent if and only if their duals are equivalent.

Let $\cC$ be an MRD codes in $\K^{m\times n}$. It is obvious that $\{M^T: M\in \cC\}$ is also an MRD codes, because the ranks of $M^T$ and $M$ are the same. When $\K=\F_q$ and $m=n$, we can also interpret the transpose of matrices into an operation on $q$-polynomials.

The \emph{adjoint} of a $q$-polynomial $f=\sum_{i=0}^{n-1}a_i x^{q^i}$ is given by 
$$\hat{f}:=\sum_{i=0}^{n-1}a_{i}^{q^{n-i}} x^{q^{n-i}}.$$ 
If $\cC$ is a rank-metric codes consisting of $q$-polynomials, then the \emph{adjoint code} of $\cC$ is $\widehat{\cC}:=\{\hat{f}: f\in\cC\}$. In fact, the adjoint of $f$ is equivalent to the transpose of the matrix derived from $f$. This result can be found in \cite{kantor_commutative_2003}.

Regarding the adjoint and Delsarte dual operation, we have
\begin{equation}\label{eq:operation_1}
\mathcal{N}_m(\widehat{\cC}) = \widehat{\mathcal{N}_r(\cC)}=\mathcal{N}_r(\cC^\perp),
\end{equation}
and
\begin{equation}\label{eq:operation_2}
\mathcal{N}_m(\cC^\perp) = \widehat{\mathcal{N}_m(\cC)}=\mathcal{N}_r(\widehat{\cC}),
\end{equation}
which are proved in \cite[Proposition 4.2]{lunardon_kernels_2017}.

\section{A class of MRD codes}\label{sec:construction}
In the rest of this paper, we write $N(x)$ instead of $N_{q^{2n}/q}(x)$ for short. By applying Lemma \ref{lm:gow}, we can get another family of MRD codes.

\begin{theorem}\label{th:construction1}
	Let $s$ and $n$ be two integers satisfying $\gcd(s,2n)=1$.	For $\gamma \in \F_{q^{2n}}$ satisfying that $N(\gamma)$ is a non-square in $\F_q$, we define $\cD_{k,s}(\gamma)$ as the set
	\begin{equation}\label{eq:D_gamma}
	\small{
	\left\{a X + \sum_{i=1}^{k-1}c_i X^{q^{is}} + \gamma bX^{q^{ks}} : c_1, \cdots, c_{k-1}\in \F_{q^{2n}}, a,b\in \F_{q^n} \right\}.}
	\end{equation}
	Then $\cD_{k,s}(\gamma)$ is an MRD code.
\end{theorem}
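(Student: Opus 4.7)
The plan is to count and to bound kernels. First, $|\cD_{k,s}(\gamma)| = q^n \cdot q^{2n(k-1)} \cdot q^n = q^{2nk}$, which matches the Singleton-like bound for an MRD code in $\F_q^{2n \times 2n}$ with minimum distance $d = 2n-k+1$. It therefore suffices to show that for every nonzero $f \in \cD_{k,s}(\gamma)$, the $\F_q$-linear map $f\colon \F_{q^{2n}}\to \F_{q^{2n}}$ has kernel of dimension at most $k-1$.

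Fix a nonzero $f(X) = aX + \sum_{i=1}^{k-1} c_i X^{q^{is}} + \gamma b X^{q^{ks}}$. If $b = 0$, then $f$ is a $q^s$-polynomial of $q^s$-degree at most $k-1$ with a nonzero coefficient, and because $\gcd(s,2n)=1$ the standard bound on the number of $\F_{q^{2n}}$-roots of such a polynomial gives $|\ker f| \leq q^{k-1}$. If $b \neq 0$, the same bound yields $|\ker f| \leq q^k$; since $f$ is $\F_q$-linear on $\F_{q^{2n}}$, the kernel has size a power of $q$, so it remains only to exclude the extremal case $|\ker f| = q^k$.

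Assume toward a contradiction that $|\ker f| = q^k$ with $b \neq 0$. Applying Lemma \ref{lm:gow} with $m = 2n$, $f_0 = a$, $f_k = \gamma b$, and noting $(-1)^{2nk} = 1$, we obtain
\[N_{q^{2ns}/q^s}(a) = N_{q^{2ns}/q^s}(\gamma b).\]
For any $x \in \F_{q^{2n}}$, coprimality of $s$ and $2n$ implies that $\{si \bmod 2n : 0 \leq i \leq 2n-1\}$ is a permutation of $\{0, 1, \dots, 2n-1\}$, so
\[N_{q^{2ns}/q^s}(x) = \prod_{i=0}^{2n-1} x^{q^{si}} = \prod_{j=0}^{2n-1} x^{q^j} = N(x).\]
Therefore $N(a) = N(\gamma) N(b)$. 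Since $a, b \in \F_{q^n}^*$, transitivity of the norm yields $N(a) = N_{q^n/q}(a)^2$ and $N(b) = N_{q^n/q}(b)^2$, which forces $N(\gamma) = (N_{q^n/q}(a)/N_{q^n/q}(b))^2$ to be a square in $\F_q^*$, contradicting the non-square hypothesis.

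Hence $|\ker f| \leq q^{k-1}$ for every nonzero $f$, so every nonzero codeword has rank at least $2n-k+1$, and $\cD_{k,s}(\gamma)$ attains the Singleton-like bound. The main technical step is recognising that $N_{q^{2ns}/q^s}$ restricts to $N = N_{q^{2n}/q}$ on $\F_{q^{2n}}$, which converts Gow's conclusion into a square-versus-non-square statement in $\F_q$; the restriction $a, b \in \F_{q^n}$ is precisely what makes $N(a)$ and $N(b)$ squares, so that the non-square hypothesis on $N(\gamma)$ becomes the obstruction forcing the extra dimension drop in the kernel bound.
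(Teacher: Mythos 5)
Your proof is correct and follows essentially the same route as the paper: compare the size $q^{2nk}$ against the Singleton-like bound, invoke the Gow--Quinlan lemma to rule out a codeword with $q^k$ roots, and derive a contradiction from $N(\gamma)=N_{q^n/q}(a/b)^2$ being a square. The only differences are presentational (you spell out the $b=0$ case and the identification $N_{q^{2ns}/q^s}|_{\F_{q^{2n}}}=N$, which the paper leaves implicit).
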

\begin{proof}
	It is clear that $\#\cD_{k,s}(\gamma)=q^{2nk}$. We need to show that for each polynomial $f\in \cD_{k,s}(\gamma)$, it has at most $q^{k-1}$ roots which means its minimum distance is $d=2n-k+1$. Hence $\cD_{k,s}(\gamma)$ is an MRD code.
	
	By way of contradiction, let us assume that $f=a X + \sum_{i=1}^{k-1}c_i X^{q^{is}} + \gamma bX^{q^{k}}$ has $q^k$ roots which implies that $a$ and $b$ are both nonzero. By Lemma \ref{lm:gow}, $N_{q^{2sn}/q^{s}}(a)=(-1)^{2nk}N_{q^{2sn}/q^{s}}(\gamma b)$. Hence $N_{q^{2sn}/q^{s}}(a/b)=N_{q^{2sn}/q^{s}}(\gamma) = N(\gamma)$. As $a,b\in \F_{q^n}$, 
	\[N_{q^{2sn}/q^{s}}\left(\frac{a}{b}\right)=\left(\frac{a}{b}\right)^{2(1+q^s+\cdots +q^{(n-1)s})}=N_{q^n/q}\left(\frac{a}{b}\right)^2\]
	which is a square in $\F_q$. However, this contradicts the assumption on $N(\gamma)$.
\end{proof}

Let us first look at the Delsarte dual code of $\cD_{k,s}(\gamma)$. It is straightforward to compute that $\cD_{k,s}(\gamma)^\perp$ equals
\[\left\{-\gamma bX + aX^{q^{ks}} + \sum_{i=k+1}^{2n-1} c_i X^{q^{is}}:  a,b\in \F_{q^n}, c_i\in \F_{q^{2n}}  \right\}.\]
Replacing $X$ by $X^{q^{2n-ks}}$ in every term and module $X^{q^{2n}}-X$, we get $\cD_{2n-k,s}(-\gamma)$.
\begin{proposition}\label{prop:dual}
	The Delsarte dual code of $\cD_{k,s}(\gamma)$ is equivalent to $\cD_{2n-k,s}(-\gamma)$.
\end{proposition}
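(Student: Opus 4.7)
The plan is to follow the two–step strategy already sketched in the paper just before the statement: first identify $\cD_{k,s}(\gamma)^\perp$ as an explicit set of linearized polynomials, then recognize that explicit set (after a monomial substitution in $X$) as an element of the family $\cD_{2n-k,s}(-\gamma)$, and finally observe that this substitution is an equivalence of codes in the sense of Definition~\ref{def:equivalence}.

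For the first step I would write a generic $q$-polynomial as $g(X)=\sum_{i=0}^{2n-1}d_iX^{q^{is}}$ (legitimate because $\gcd(s,2n)=1$) and pair it against $f(X)=aX+\sum_{i=1}^{k-1}c_iX^{q^{is}}+\gamma bX^{q^{ks}}\in\cD_{k,s}(\gamma)$ via the bilinear form $b(f,g)=\mathrm{Tr}_{q^{2n}/q}\!\left(ad_0+\sum_{i=1}^{k-1}c_id_i+\gamma b\,d_k\right)$. Letting the coefficients $c_i\in\F_{q^{2n}}$ vary freely forces $d_i=0$ for $1\le i\le k-1$, while varying $a,b\in\F_{q^n}$ constrains $d_0$ and $\gamma d_k$ to lie in the one-dimensional $\F_{q^n}$-subspace of $\F_{q^{2n}}$ cut out by $\mathrm{Tr}_{q^{2n}/q^n}(y)=0$. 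Writing these constraints in the normalized form used in the paper yields
\[
\cD_{k,s}(\gamma)^\perp=\left\{-\gamma bX+aX^{q^{ks}}+\sum_{i=k+1}^{2n-1}c_iX^{q^{is}}:a,b\in\F_{q^n},\ c_i\in\F_{q^{2n}}\right\}.
\]

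For the second step I would apply the substitution $X\mapsto X^{q^{(2n-k)s}}$ to every polynomial in this dual and reduce modulo $X^{q^{2n}}-X$. Because $ks+(2n-k)s=2ns\equiv 0\pmod{2n}$, the term $aX^{q^{ks}}$ becomes $aX$, the term $-\gamma bX$ becomes $-\gamma bX^{q^{(2n-k)s}}$, and for $k<i<2n$ the term $c_iX^{q^{is}}$ becomes $c_iX^{q^{(i-k)s}}$, with exponent index $j=i-k$ running over $1,\dots,2n-k-1$. Collecting terms, the image polynomial is exactly of the form $aX+\sum_{j=1}^{2n-k-1}\tilde c_jX^{q^{js}}+(-\gamma)bX^{q^{(2n-k)s}}$, which is the generic member of $\cD_{2n-k,s}(-\gamma)$. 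Note that $N(-\gamma)=(-1)^{2n}N(\gamma)=N(\gamma)$, so $-\gamma$ satisfies the non-square hypothesis of Theorem~\ref{th:construction1} and the target code is well defined.

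To close, I would note that the map $X\mapsto X^{q^{(2n-k)s}}$ is an invertible $\F_q$-linearized polynomial (an iterate of the Frobenius on $\F_{q^{2n}}$), so right-composing every element of $\cD_{k,s}(\gamma)^\perp$ with it produces the equivalence prescribed in Definition~\ref{def:equivalence} with $\varphi_1=\mathrm{id}$, $\varphi_2(X)=X^{q^{(2n-k)s}}$, $\psi=0$ and $\rho=\mathrm{id}$. The only delicate point—the ``main obstacle''—is the exponent bookkeeping: one must consistently track the indices $is\bmod 2n$ when writing the dual in $q^{is}$ form and, during the substitution, correctly use $X^{q^{2ns}}\equiv X$ to see the exponent $ks$ collapse to $0$ while $k$ is replaced by $2n-k$ in the scalar-constrained position. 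Once this indexing is handled, the identification with $\cD_{2n-k,s}(-\gamma)$ is immediate.
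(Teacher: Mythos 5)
Your overall route is the paper's: compute $\cD_{k,s}(\gamma)^\perp$ coefficientwise from the bilinear form, then right-compose with $X^{q^{(2n-k)s}}$ and reduce modulo $X^{q^{2n}}-X$. The substitution bookkeeping and the remark that $N(-\gamma)=N(\gamma)$ are fine. The gap is in the ``normalized form'' step. You correctly derive that the conditions on $g=\sum d_iX^{q^{is}}$ are $d_i=0$ for $1\le i\le k-1$ together with $\Tr_{q^{2n}/q^n}(d_0)=0$ and $\Tr_{q^{2n}/q^n}(\gamma d_k)=0$; that is, $d_0\in\delta\F_{q^n}$ and $d_k\in\delta\gamma^{-1}\F_{q^n}$, where $\delta$ is a fixed nonzero element with $\delta^{q^n}=-\delta$ (note $q$ is odd, since $\F_q$ must contain a non-square). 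The displayed description of the dual instead asserts $d_0\in-\gamma\F_{q^n}=\gamma\F_{q^n}$ and $d_k\in\F_{q^n}$, which amounts to claiming $\ker\Tr_{q^{2n}/q^n}=\gamma\F_{q^n}$. That is false in general: $\delta\F_{q^n}^*$ is one particular coset of $\F_{q^n}^*$, while the admissible $\gamma$ range over the $(q^n+1)/2$ cosets of non-square norm; in fact when $q^n\equiv 3\pmod 4$ one checks that $N(\delta)$ is a \emph{square} in $\F_q$, so no admissible $\gamma$ ever lies in $\delta\F_{q^n}$. (The paper's own displayed formula for the dual carries the same imprecision, so you have not deviated from its argument, but as an equality of sets the step would fail.)

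The statement survives because it only claims equivalence, and the repair is one line. The actual dual is
\[
\cD_{k,s}(\gamma)^\perp=\Bigl\{\delta uX+\delta\gamma^{-1}vX^{q^{ks}}+\sum_{i=k+1}^{2n-1}c_iX^{q^{is}}:u,v\in\F_{q^n},\ c_i\in\F_{q^{2n}}\Bigr\},
\]
and applying your substitution $X\mapsto X^{q^{(2n-k)s}}$ turns it into $(\delta\gamma^{-1}X)\circ\cD_{2n-k,s}(\gamma)$. Left-composing with $\gamma\delta^{-1}X$ --- i.e.\ taking $\varphi_1=\gamma\delta^{-1}X$ instead of the identity in Definition~\ref{def:equivalence} --- lands exactly in $\cD_{2n-k,s}(\gamma)$, which equals $\cD_{2n-k,s}(-\gamma)$ as a set since $-1\in\F_{q^n}^*$. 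So you should either insert this extra scalar $\varphi_1$, or replace the claimed set equality for the dual by ``equal up to left multiplication by a nonzero constant,'' which is all the proposition needs.
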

It can also be readily verified the following result of the adjoint code of $\cD_{k,s}(\gamma)$.
\begin{proposition}\label{prop:adjoint}
	The adjoint code of $\cD_{k,s}(\gamma)$ is equivalent to $\cD_{k,s}(1/\gamma)$.
\end{proposition}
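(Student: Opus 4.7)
The plan is to write down an explicit equivalence transformation from $\widehat{\cD_{k,s}(\gamma)}$ to $\cD_{k,s}(1/\gamma)$ of the form $g \mapsto \gamma^{-1}\, g^{q^{ks}}$. In the formalism of linearised polynomials this corresponds to $\varphi_1 \circ g^\rho \circ \varphi_2$ with $\varphi_1(X) = \gamma^{-1} X$, $\varphi_2(X) = X^{q^{ks}}$, and $\rho$ the $q^{ks}$-th power Frobenius on $\F_{q^{2n}}$, which restricts to an element of $\Aut(\F_q)$; both $\varphi_1$ and $\varphi_2$ are permutation $q$-polynomials of $\F_{q^{2n}}$, so this is a legitimate equivalence in the sense of Definition \ref{def:equivalence}.

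First I would apply the definition of the adjoint termwise to a generic member $f = aX + \sum_{i=1}^{k-1} c_i X^{q^{is}} + \gamma b X^{q^{ks}}$ of $\cD_{k,s}(\gamma)$, obtaining $\hat f(X) = a X + \sum_{i=1}^{k-1} c_i^{q^{2n-is}} X^{q^{2n-is}} + (\gamma b)^{q^{2n-ks}} X^{q^{2n-ks}}$. Next, raising $\hat f$ to the $q^{ks}$-th power and reducing exponents modulo $2n$ (using $X^{q^{2n}} \equiv X$ and $c^{q^{2n}} = c$ for every $c \in \F_{q^{2n}}$) turns the ``negative'' indices into positive ones; after the substitution $j = k-i$ in the middle terms one obtains $\hat f(X)^{q^{ks}} = \gamma b\, X + \sum_{j=1}^{k-1} c_{k-j}^{q^{js}} X^{q^{js}} + a^{q^{ks}} X^{q^{ks}}$. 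Multiplication by $\gamma^{-1}$ then produces $b\, X + \sum_{j=1}^{k-1} \gamma^{-1} c_{k-j}^{q^{js}} X^{q^{js}} + \gamma^{-1} a^{q^{ks}} X^{q^{ks}}$.

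The key check is that this has the shape required of an element of $\cD_{k,s}(1/\gamma)$: the coefficient of $X$ is $b \in \F_{q^n}$, the coefficient of $X^{q^{ks}}$ equals $\gamma^{-1} b'$ with $b' = a^{q^{ks}} \in \F_{q^n}$ (since $\F_{q^n}$ is stable under every Frobenius power of $\F_{q^{2n}}$), and the middle coefficients are arbitrary elements of $\F_{q^{2n}}$. Since the transformation $f \mapsto \gamma^{-1} \hat f^{q^{ks}}$ is a bijection on $\lp{2n}{q}[X]$ and $|\cD_{k,s}(\gamma)| = |\cD_{k,s}(1/\gamma)| = q^{2nk}$, the image is exactly $\cD_{k,s}(1/\gamma)$. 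I do not anticipate any real obstacle; the argument is little more than exponent-bookkeeping, and the only subtlety is verifying that the leading and trailing coefficients remain in the subfield $\F_{q^n}$ after the $q^{ks}$-Frobenius twist, which is immediate from $\F_{q^n}$ being Galois over $\F_q$.
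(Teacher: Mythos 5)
Your computation is correct and is exactly the routine verification the paper has in mind (the paper states the proposition without proof, remarking only that it "can be readily verified"): taking the adjoint termwise, composing with $X^{q^{ks}}$ to shift the exponents back into $\{0,s,\dots,ks\}$, and scaling by $\gamma^{-1}$ lands precisely in $\cD_{k,s}(1/\gamma)$, with the leading and trailing coefficients staying in $\F_{q^n}$ because that subfield is Frobenius-stable. No gaps.
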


By Theorem \ref{th:construction1}, it is clear that $aX+\gamma bX^{q^{s}}$ defines a semifield multiplication. As $\gamma\notin \F_{q^n}$ (otherwise $N(\gamma)$ must be a square in $\F_q$), for every $x\in \F_{q^{2n}}$, we can write it as $x=c+d\gamma$ for some $c,d\in \F_{q^{n}}$. 

Assume that $\gamma^{q^s+1}=u+v\gamma$ for certain $u,v\in \F_{q^n}$. 
By expanding $ax+\gamma bx^{q^{s}}$, we have
\[ax+\gamma bx^{q^{s}}= (ac+bd^{q^s}u) + (ad+bc^{q^s} + bd^{q^s}v)\gamma. \]
We view them as vectors in $\F_{q^n}^2$ and define a semifield multiplication
\begin{equation}\label{eq:semi_multi}
(c,d)*(a,b)=(ac+bd^{q^s}u , ad+bc^{q^s} + bd^{q^s}v),
\end{equation}
for $a,b,c,d\in \F_{q^n}$. By comparing with \cite[Theorem 9.7]{hughes_projective_1973}, we see that \eqref{eq:semi_multi} is exactly the multiplication of a Hughes-Kleinfeld semifield \cite{hughes_seminuclear_1960}, which is also the multiplication of a Knuth semifield of type \RN{2} \cite{knuth_finite_1965}.

By \cite[Lemma 9.8]{hughes_projective_1973}, $\F_{q^n}$ is the right and middle nucleus of $\mathbb{H}$. In \cite{hughes_collineation_1960}, a necessary and sufficient condition for $x+y\gamma \in N_l(\mathbb{H})$ are derived.
\begin{proposition}\label{prop:3nuclei}
	Let $*$ be the multiplication defined by \eqref{eq:semi_multi} and $\mathbb{H}$ denote the associated semifield $(\F_{q^n}^2, +, *)$.
	\begin{enumerate}[label=(\alph*)]
		\item $N_r(\mathbb{H}) = \F_{q^n}$.
		\item $N_m(\mathbb{H}) = \F_{q^n}$.
		\item For $x,y\in \F_{q^n}$, $x+y\gamma \in N_l(\mathbb{H})$ if and only if
		\[ \left\{
		\begin{array}{l}
		x^{q^{2s}}+y^{q^{2s}}v^{q^s}=x+y^{q^s}v,\\ 
		yu + x^{q^s}v +y^{q^s}v^2 = y^{q^{2s}} u^{q^s} + x^{q^{2s}} v + y^{q^{2s}} v^{q^s+1}.
		\end{array} 
		\right.  \]
	\end{enumerate}
\end{proposition}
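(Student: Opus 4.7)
The plan is to verify all three parts by direct computation with the multiplication $*$ in \eqref{eq:semi_multi}; parts (a) and (b) also follow immediately from the classical result of Hughes--Kleinfeld cited just above the statement.

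For the forward inclusions $\F_{q^n}\subseteq N_r(\mathbb{H})$ and $\F_{q^n}\subseteq N_m(\mathbb{H})$, observe from \eqref{eq:semi_multi} that, for $\alpha\in\F_{q^n}$, right multiplication by $(\alpha,0)$ on $\F_{q^n}^2$ coincides with scalar multiplication by $\alpha$, and that $(a,b)\mapsto (c,d)*(a,b)$ is $\F_{q^n}$-linear in the second argument; these give at once $(\alpha,0)\in N_r(\mathbb{H})$, and a direct expansion of $(c,d)*\bigl((\alpha,0)*(a,b)\bigr)$ and $\bigl((c,d)*(\alpha,0)\bigr)*(a,b)$ shows that both equal $(\alpha ac+\alpha^{q^s}bd^{q^s}u,\ \alpha ad+\alpha^{q^s}bc^{q^s}+\alpha^{q^s}bd^{q^s}v)$, giving $(\alpha,0)\in N_m(\mathbb{H})$. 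For the reverse inclusions, I would pass to the spread set $\cD=\{aX+\gamma bX^{q^s}:a,b\in\F_{q^n}\}$ inside $\lp{2n}{q}[X]/(X^{q^{2n}}-X)$ and use the polynomial characterisations of $\mathcal{N}_r(\cD)$ and $\mathcal{N}_m(\cD)$ via composition. Because $X\in\cD$, any $\varphi$ in one of these sets must itself lie in $\cD$, so $\varphi=\alpha X+\gamma\beta X^{q^s}$ for some $\alpha,\beta\in\F_{q^n}$. Composing $\varphi$ with a generic $aX+\gamma bX^{q^s}\in\cD$ and using $\gamma^{q^s+1}=u+v\gamma\neq 0$, one finds that the coefficient of $X^{q^{2s}}$ in the result is a nonzero scalar multiple of $\beta$ (times $b$ or $b^{q^s}$). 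Since $\gcd(s,2n)=1$, the three monomials $X,\ X^{q^s},\ X^{q^{2s}}$ are distinct modulo $X^{q^{2n}}-X$, and as $\cD$ contains no $X^{q^{2s}}$-term this coefficient must vanish for every $b\in\F_{q^n}$, forcing $\beta=0$. The remaining condition on $\alpha X$ is then automatic.

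For part (c), I would expand the associativity identity
\[(x,y)*\bigl((c,d)*(a,b)\bigr)=\bigl((x,y)*(c,d)\bigr)*(a,b)\]
using \eqref{eq:semi_multi} and view it as a polynomial identity in the independent variables $a,b,c,d\in\F_{q^n}$. Most monomials cancel between the two sides; the only surviving monomials in each coordinate carry the common factor $bd^{q^s}$. Matching the coefficients of $bd^{q^s}$ in the first coordinate yields $x^{q^{2s}}+y^{q^{2s}}v^{q^s}=x+y^{q^s}v$, and in the second coordinate yields $yu+x^{q^s}v+y^{q^s}v^2=y^{q^{2s}}u^{q^s}+x^{q^{2s}}v+y^{q^{2s}}v^{q^s+1}$, which are exactly the two displayed equations; conversely, once these identities hold the whole polynomial identity collapses and $(x,y)$ is left-nuclear. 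The main technical obstacle is the bookkeeping caused by repeatedly applying the $q^s$-Frobenius to nonlinear expressions in $c$ and $d$; organising the expansion by bidegree in $(a,b)$ and $(c,d)$ and isolating the $bd^{q^s}$ monomials keeps the computation tractable.
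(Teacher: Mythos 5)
Your argument is correct, but it takes a genuinely different route from the paper: the paper gives no proof of this proposition at all, deducing (a) and (b) from Lemma 9.8 of Hughes--Piper (\emph{Projective planes}) and (c) from Hughes' 1960 determination of the seminuclear division algebras, whereas you verify everything by direct computation from \eqref{eq:semi_multi}. Your computations check out: for the forward inclusions in (a) and (b) the two expansions do agree as you state; for the reverse inclusions, passing to the spread set $\cD_{1,s}(\gamma)$ and killing the $X^{q^{2s}}$-coefficient $\gamma^{q^s+1}\beta b^{q^s}$ (resp.\ $\gamma^{q^s+1}\beta^{q^s}b$) forces $\beta=0$; and in (c) the only monomials surviving the cancellation are indeed those carrying $bd^{q^s}$, with coefficients $u\,(x+y^{q^s}v)$ versus $u\,(x^{q^{2s}}+y^{q^{2s}}v^{q^s})$ in the first coordinate and exactly the two sides of the second displayed equation in the second coordinate. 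Two details you should make explicit. First, to pass from the first-coordinate comparison to the first equation of (c) you must cancel $u$, so you need $u\neq 0$; this holds because $u=0$ would give $\gamma^{q^s}=v\in\F_{q^n}$, hence $\gamma\in\F_{q^n}$, contradicting the hypothesis that $N(\gamma)$ is a non-square in $\F_q$. Second, your reverse inclusions silently use the identifications $\mathcal N_r(\cD_{1,s}(\gamma))\cong N_r(\mathbb{H})$ and $\mathcal N_m(\cD_{1,s}(\gamma))\cong N_m(\mathbb{H})$ under $(\alpha,0)\mapsto \alpha X$, which are valid precisely because $\mathbb{H}$ is unital (so $X\in\cD_{1,s}(\gamma)$ and the element $\varphi$ of either nucleus is itself in the spread set); a sentence recording this, and the fact that $X$, $X^{q^s}$, $X^{q^{2s}}$ are distinct modulo $X^{q^{2n}}-X$ once $n\geq 2$, would close the argument. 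What your route buys is a self-contained proof that makes visible where the two conditions in (c) come from; what the paper's citation buys is the avoidance of all this bookkeeping.
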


Next, let us investigate the middle and right nucleus of the MRD codes $\cD_{k,s}(\gamma)$ defined in Theorem \ref{th:construction1}. They are very important invariants with respect to the equivalence of rank-metric codes. We will use them later to show that $\cD_{k,s}(\gamma)$ contains MRD codes which are not equivalent to any known one.
\begin{theorem}\label{th:N_rN_m_construction1}
	Let $k$ be an integer satisfying $1\le k\le  2n-1$. Then the right nucleus of $\cD_{k,s}(\gamma)$ is
	\begin{equation}\label{eq:N_r(D)}
	\mathcal N_r(\cD_{k,s}(\gamma))= \{aX : a\in \F_{q^n}\},
	\end{equation}
	and its middle nucleus is
	\begin{equation}\label{eq:N_m(D)}
	\mathcal N_m(\cD_{k,s}(\gamma))= \{aX : a\in \F_{q^n}\}.
	\end{equation}
\end{theorem}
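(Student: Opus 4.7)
My plan is to prove both equalities by double inclusion. The easy inclusion, $\{aX : a \in \F_{q^n}\} \subseteq \mathcal{N}_r(\cD_{k,s}(\gamma)) \cap \mathcal{N}_m(\cD_{k,s}(\gamma))$, I would verify directly from the shape~\eqref{eq:D_gamma}: pre- or post-composition by $aX$ with $a\in\F_{q^n}$ multiplies every coefficient of an arbitrary $f\in\cD_{k,s}(\gamma)$ by some element of $\F_{q^n}$, since $\F_{q^n}$ is closed under the $q$-Frobenius. In particular the outer coefficients $a_0$ and $\gamma b$ stay in $\F_{q^n}$ and $\gamma\F_{q^n}$ respectively, while the inner coefficients stay in $\F_{q^{2n}}$, so the result is again of the form~\eqref{eq:D_gamma}.

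The substance is the reverse inclusion. I would write an element of the nucleus as $\varphi(X)=\sum_{j=0}^{2n-1}\alpha_j X^{q^{js}}$ and plug a carefully chosen family of $f\in\cD_{k,s}(\gamma)$ into the defining condition $\varphi\circ f\in\cD_{k,s}(\gamma)$ (respectively $f\circ\varphi\in\cD_{k,s}(\gamma)$), expanding modulo $X^{q^{2n}}-X$ and reading off each coefficient. For the right nucleus, three batches of test polynomials should suffice. First, taking $f(X)=aX$ with $a\in\F_{q^n}^\times$ gives $\varphi\circ f=\sum_j\alpha_j a^{q^{js}}X^{q^{js}}$, which forces $\alpha_j=0$ for $k<j\le 2n-1$, $\alpha_0\in\F_{q^n}$, and $\alpha_k\in\gamma\F_{q^n}$. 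Second, taking $f(X)=\gamma X^{q^{ks}}$ and shifting exponents yields $\alpha_j=0$ for $1\le j\le 2n-k-1$. Third, when $k\ge 2$, taking $f(X)=cX^{q^{is}}$ with $c\in\F_{q^{2n}}$ and $1\le i\le k-1$, and requiring the coefficients at the forbidden exponents $q^{(k+1)s},\ldots,q^{(2n-1)s}$ to vanish, forces $\alpha_j=0$ for $2\le j\le 2n-2$. The union of these three vanishing regimes will cover $\{1,\ldots,2n-1\}$ for every admissible $k$, leaving only $\alpha_0\in\F_{q^n}$.

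The middle-nucleus argument I expect to run in parallel: replacing $\varphi\circ f$ by $f\circ\varphi$ changes each coefficient extraction only by inserting a $q^{is}$-Frobenius in front of the $\alpha_j$'s, which affects neither whether $\alpha_j$ is zero nor whether it lies in $\F_{q^n}$, since $\F_{q^n}$ is Frobenius-closed. The same three families of test polynomials should therefore give the same constraints and the same conclusion. Alternatively, one could try to derive the middle-nucleus statement from the right-nucleus statement via the identity $\mathcal{N}_m(\cC^\perp)=\mathcal{N}_r(\widehat\cC)$ from~\eqref{eq:operation_2} combined with Propositions~\ref{prop:dual} and~\ref{prop:adjoint}, but the direct approach keeps the bookkeeping more transparent.

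The main obstacle I foresee is the index arithmetic modulo $2n$: when $k\ge n$ the ranges $\{k+1,\ldots,2n-1\}$ and $\{1,\ldots,2n-k-1\}$ from the first two tests begin to overlap with the ``middle range'' $\{2,\ldots,2n-2\}$, so one has to verify case by case that their union really does exhaust $\{1,\ldots,2n-1\}$. The extreme values $k=1$ and $k=2n-1$, where the third test is empty or only partly available, will need separate attention: the handful of surviving indices are killed by observing that a free parameter $c\in\F_{q^{2n}}$ composed with a $q^{is}$-Frobenius makes $\alpha_j c^{q^{is}}$ sweep out all of $\F_{q^{2n}}$ whenever $\alpha_j\neq 0$, which is incompatible with the requirement that the corresponding coefficient lie in the proper $\F_q$-subspace $\F_{q^n}$ or $\gamma\F_{q^n}$, thereby forcing $\alpha_j=0$.
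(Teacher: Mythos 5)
Your proposal is correct, and its computational engine --- feeding the test elements $aX$, $\gamma X^{q^{ks}}$ and $cX^{q^{is}}$ ($1\le i\le k-1$, $c$ arbitrary) into the nucleus condition, then reading off which coefficients must vanish (forbidden exponents) or lie in $\F_{q^n}$ resp.\ $\gamma\F_{q^n}$ (the exponents $0$ and $ks$) --- is exactly what the paper does for $2\le k\le 2n-2$, where it uses $c_jX^{q^{js}}$ and $aX+\gamma bX^{q^{ks}}$ as tests. The differences are organizational but genuine. First, the paper dispatches the boundary cases by identifying $\cD_{1,s}(\gamma)$ with a Hughes--Kleinfeld semifield and quoting Proposition~\ref{prop:3nuclei}, then handling $k=2n-1$ by Delsarte duality; you treat all $k$ uniformly, which for $k=2n-1$ forces you to rely entirely on the ``sweeping'' argument, since there are no forbidden exponents at all and your three vanishing regimes are then all empty --- your blanket claim that their union covers $\{1,\dots,2n-1\}$ for every admissible $k$ is therefore false at $k=2n-1$, but you flag this and supply the correct mechanism (a nonzero $\alpha_j$ times a free $c^{q^{js}}$ sweeps $\F_{q^{2n}}$ and cannot stay inside $\F_{q^n}$ or $\gamma\F_{q^n}$), so the gap closes. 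Second, for the middle nucleus the paper does not recompute: it chains Propositions~\ref{prop:dual} and~\ref{prop:adjoint} with \eqref{eq:operation_1} to reduce $\mathcal N_m(\cD_{k,s}(\gamma))$ to the right nucleus of another code in the same family; your direct parallel computation (the Frobenius merely migrates from the test coefficient onto $\alpha_j$, which changes nothing since $\F_{q^n}$ is Frobenius-stable) is equally valid and more self-contained, at the cost of duplicating the bookkeeping. Both routes buy something: yours avoids importing the semifield and adjoint/dual machinery; the paper's is shorter and makes the structural symmetries of the family visible.
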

\begin{proof}
	When $k=1$, $\cD_{k,s}(\gamma)$ is isotopic to a Hughes-Kleinfeld semifield $\mathbb H$, and the result can be then derived from Proposition \ref{prop:3nuclei}. By duality, we get the result for $k=2n-1$.
	
	In the rest part, we assume that $2\leq k \leq 2n-2$. Assume that $\varphi=\sum_{i=0}^{2n-1}d_iX^{q^{is}}$ is an element in $\mathcal N_r(\cD_{k,s}(\gamma))$. As $\varphi(c_1 X^{q^s})\in \cD_{k,s}(\gamma)$, we see that $d_i=0$ for $k<i<2n-1$. In fact, only $d_{2n-1}$, $d_0$ and $d_1$ can be nonzero. When $k=2$, this is obvious. When $k>2$, this statement can be directly verified by checking $\varphi(c_jX^{q^{js}})\in \cD_{k,s}(\gamma)$ for $j=2,\cdots, k-1$.
	
	Next we consider $\varphi(aX+ \gamma bX^{q^{ks}})$ which should also be in $\cD_{k,s}(\gamma)$. As $\varphi = d_{2n-1}X^{q^{(2n-1)s}}+d_0X +d_1X^{q^s}$, in the expansion of $\varphi(aX+ \gamma bX^{q^{ks}})$ the coefficient of $X^{q^{(2n-1)s}}$ is $d_{2n-1}a^{q^{(2n-1)s}}$ and the coefficient of $X^{q^{(k+1)s}}$ is $ d_1(\gamma b)^{q^s}$. Since $a$ and $b$ can take any value in $\F_{q^n}$, by checking the elements in $\cD_{k,s}(\gamma)$ we see that $d_{2n-1}$ and $d_1$ must be $0$. Thus $\varphi=d_0X$. From
	\[\varphi(aX+ \gamma bX^{q^{ks}})=d_0aX+ \gamma d_0bX^{q^{ks}}\in \cD_{k,s}(\gamma) \]
	we derive $d_0\in \F_{q^n}$.
	
	By $\widehat{\cD_{k,s}}(\gamma)=\cD_{k,s}(1/\gamma)$ in Proposition \ref{prop:adjoint} and \eqref{eq:operation_1}, we get
	\begin{align*}
	\mathcal N_m(\cD_{k,s}(\gamma)) &=\mathcal{N}_r\left(\cD_{k,s}\left(\frac{1}{\gamma}\right)^\perp\right)\\
	&=\mathcal{N}_r\left(\cD_{2n-k,s}\left(-\frac{1}{\gamma^{q^{2n-ks}}}\right)\right)	
	\end{align*}
	which equals $\{aX : a\in \F_{q^n}\}$.
\end{proof}
\begin{corollary}\label{coro:inequivalence}
	In $\F_{q}^{2n\times 2n}$, the MRD code $\cD_{k,s}(\gamma)$ is not equivalent to any generalized Gabidulin code. When $k\neq n$ or $h\neq n$, $\cD_{k,s}(\gamma)$ is also not equivalent to any generalized twisted Gabidulin code $\cH_{k,t}(\eta, h)$ with $\eta\neq 0$.
\end{corollary}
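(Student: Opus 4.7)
The plan is to use the middle and right nuclei as equivalence invariants and exploit that, by Theorem~\ref{th:N_rN_m_construction1}, both $\mathcal N_m(\cD_{k,s}(\gamma))$ and $\mathcal N_r(\cD_{k,s}(\gamma))$ are isomorphic to $\F_{q^n}$ as $\F_q$-algebras and have cardinality exactly $q^n$.

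For the generalized Gabidulin code $\cG_{k,t}$ in $\F_q^{2n\times 2n}$, the key observation is that the code is $\F_{q^{2n}}$-linear from both sides: composition with $aX$ for any $a\in \F_{q^{2n}}$, either on the left or on the right, sends $\cG_{k,t}$ into itself. Hence both $\mathcal N_m(\cG_{k,t})$ and $\mathcal N_r(\cG_{k,t})$ contain $\{aX:a\in \F_{q^{2n}}\}$, a set of size $q^{2n}$. Because middle and right nuclei are preserved (up to $\F_q$-algebra isomorphism) under the equivalence of Definition~\ref{def:equivalence}, the cardinality mismatch $q^{2n}\neq q^n$ rules out equivalence between $\cD_{k,s}(\gamma)$ and any generalized Gabidulin code.

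For the twisted Gabidulin case, I would invoke the nuclei computations for $\cH_{k,t}(\eta,h)$ carried out in~\cite{lunardon_generalized_2018}: under $\eta\neq 0$, the middle and right nuclei of $\cH_{k,t}(\eta,h)$ are subfields of $\F_{q^{2n}}$ whose orders are controlled by $\gcd(h,2n)$ and $\gcd(kt-h,2n)$ respectively. For both of these subfields to equal $\F_{q^n}$, both greatest common divisors must equal $n$, and this forces precisely $k=n$ together with $h=n$ (recalling $\gcd(t,2n)=1$). In every other configuration at least one of the nuclei of $\cH_{k,t}(\eta,h)$ has cardinality different from $q^n$, so the nuclei invariants separate $\cD_{k,s}(\gamma)$ from $\cH_{k,t}(\eta,h)$.

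The main obstacle is citing and applying the precise statements from~\cite{lunardon_generalized_2018} for the nuclei of twisted Gabidulin codes and verifying that the only configuration producing both nuclei of size $q^n$ is indeed the excluded one $k=n$, $h=n$; this is a finite arithmetic check on the divisibility conditions. A minor subtlety is that rank-metric equivalence preserves nuclei only up to $\F_q$-algebra isomorphism, not as sets of linearized polynomials, but since the nuclei involved in both constructions are subfields of $\F_{q^{2n}}$, they are determined by their orders, so the cardinality comparison alone suffices for the inequivalence argument.
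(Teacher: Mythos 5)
Your proposal is correct and uses essentially the same strategy as the paper: both arguments compare the cardinalities of the middle and right nuclei, using Theorem~\ref{th:N_rN_m_construction1} for $\cD_{k,s}(\gamma)$ and the known nuclei of (twisted) Gabidulin codes for the other side. Two small points of divergence are worth flagging. First, you attach $\gcd(2n,h)$ to the middle nucleus and $\gcd(2n,kt-h)$ to the right nucleus, which is the opposite of the convention in \cite{lunardon_kernels_2017}; this is immaterial here because the argument only needs both gcds to equal $n$, but be consistent if you quote the source. Second, the paper does \emph{not} run the nuclei argument uniformly: it treats $k=1$ via the isotopy theory of generalized twisted fields versus the Hughes--Kleinfeld semifield, and $k=2n-1$ via Delsarte duality, reserving the citation of the nuclei formulas for $1<k<2n-1$ --- presumably because those formulas are stated in the literature only for the middle range of $k$. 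Your observation that a generalized Gabidulin code is $\F_{q^{2n}}$-linear under composition on both sides is self-contained and genuinely covers all $k$, so that half needs no case analysis; for the twisted Gabidulin half you should either verify that the cited nuclei computation is actually proved for $k=1$ and $k=2n-1$ (a direct check of $\varphi\circ f$ and $f\circ\varphi$ on monomials confirms it, but the quoted corollary may not state it) or fall back on the paper's semifield and duality arguments in those two boundary cases.
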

\begin{proof}
	When $k=1$, a (generalized) Gabidulin code $\{aX : a\in\F_{q^{2n}} \}$ is derived from the multiplication of the finite field $\F_{q^{2n}}$, which is never isotopic to a Hughes-Kleinfeld semifield. Hence the corresponding MRD codes are not equivalent.
	
	Moreover, $\cH_{1,s}(\eta,h)$ associates a generalized twisted field, which is a presemifield. If $s\not \equiv h \pmod{2n}$, it is isotopic to a semifield whose middle nucleus is of size $q^{\gcd(2n,s-h)}$ and right nucleus is of size $q^{\gcd(2n, h)}$; otherwise $s\equiv h \pmod{2n}$ and this presemifield is isotopic to the finite field $\F_{q^{2n}}$; see~\cite{albert_generalized_1961} and~\cite{biliotti_collineation_1999}. Thus $\cH_{1,s}(\eta,h)$ cannot be equivalent to $\cD_{1,s}(\gamma)$.
	
	When $k=2n-1$, we can simply consider the equivalences between their Delsarte dual codes which have been already determined above.
	
	In the rest, we only have to investigate the equivalence problem for $1<k<2n-1$.
	
	According to \cite[Corollary 5.9 (a)]{lunardon_kernels_2017}, the middle (or right) nucleus of a generalized Gabidulin codes over $\F_{q^{2n}}$ is always  $\F_{q^{2n}}$. Hence it is different from the middle nucleus of $\cD_{k,s}(\gamma)$ by Theorem \ref{th:N_rN_m_construction1}, which means that they are not equivalent.
	
	According to \cite[Corollary 5.9 (b)]{lunardon_kernels_2017},
	$\mathcal N_m(\cH_{k,t}(\eta, h))$ in $\F_q^{2n\times 2n}$ is of size $q^{\gcd(2n, sk-h)}$ and $\mathcal N_r(\cH_{k,t}(\eta, h))$ is of size $q^{\gcd(2n, h)}$. Hence, $\cD_{k,s}(\gamma)$ and $\cH_{k,t}(\eta, h)$ are equivalent only if $\gcd(2n,h)=h$ and $\gcd(2n, sk-h)=n$ which means $h=n=k$.
\end{proof}

For $k=2$, as the middle nucleus of $\cD_{2,s}(\gamma)$ is not of size $q^{2n}$, it is also not equivalent to those MRD codes associated with maximum scattered linear sets constructed in \cite{csajbok_newMRD_2017} and \cite{csajbok_maximum_arxiv}.

\section{Equivalence}\label{sec:equivalence}
In Section \ref{sec:construction}, we have shown that most members of the MRD codes $\cD_{k,s}(\gamma)$ are new with respect to the equivalence of rank-metric codes. In the last part of this section, we will completely solve the last open case whether $\cD_{n,s}(\gamma)$ is equivalent to $\cH_{n,t}(\theta, n)$ or not.

First we investigate the equivalence between different members of this family. 
If we want to further determine the isometric equivalence between $\cD_{k,s}(\gamma)$ and $\cD_{k,t}(\theta)$, the answer follows directly from our result about the equivalence map from $\cD_{k,s}(\gamma)$ to $\cD_{k,t}(\theta)$ or its adjoint $\cD_{k,t}(1/\theta)$.

By using our knowledge of the middle and right nucleus of $\cD_{k,s}(\gamma)$, we can prove the following results.
\begin{lemma}\label{lm:equivalence_map}
	Let $n,s,t\in \Z^+$ satisfying $\gcd(2n,s)=\gcd(2n,t)=1$. Let $\gamma$ and $\theta$ be in $\F_{q^{2n}}$ satisfying that $N(\gamma)$ and $N(\theta)$ are both non-square in $\F_q$. Let $(\varphi_1,\varphi_2, \rho)$ be an equivalence map between $\cD_{k,s}(\gamma)$ and $\cD_{k,t}(\theta)$ for $1<k<2n-1$. If $k\neq n$ or $n\geq3$, then $\varphi_1$ and $\varphi_2$ are both monomials.
\end{lemma}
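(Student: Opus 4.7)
The plan is to combine the nucleus characterization from Theorem~\ref{th:N_rN_m_construction1} with a pair-analysis of the exponents appearing in $\varphi_1 \circ f^\rho \circ \varphi_2$.

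First I would use that any equivalence of rank-metric codes induces isomorphisms on the middle and right nuclei via $\chi \mapsto \varphi_2^{-1} \circ \chi^\rho \circ \varphi_2$ and $\chi \mapsto \varphi_1 \circ \chi^\rho \circ \varphi_1^{-1}$. By Theorem~\ref{th:N_rN_m_construction1} all four nuclei equal $\{aX : a \in \F_{q^n}\}$. Writing $\varphi_1 = \sum_{i=0}^{2n-1} e_i X^{q^i}$ and comparing the identity $\varphi_1 \circ (b^\rho X) = b' X \circ \varphi_1$ (with $b, b' \in \F_{q^n}$) coefficient by coefficient forces $(b^\rho)^{q^i} = (b^\rho)^{q^j}$ for every $b \in \F_{q^n}$ whenever $e_i e_j \neq 0$; since $b^\rho$ runs over $\F_{q^n}^*$, this imposes $n \mid i - j$. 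The support of $\varphi_1$ therefore lies in $\{A, A+n\}$ for some $A \in \{0, \ldots, n-1\}$, and an identical argument on $\varphi_2$ yields
\[ \varphi_1 = e_0 X^{q^A} + e_1 X^{q^{A+n}}, \qquad \varphi_2 = f_0 X^{q^C} + f_1 X^{q^{C+n}}. \]

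Second, I would suppose toward a contradiction that at least one of $\varphi_1, \varphi_2$ is a genuine binomial. For each $j \in \{1, \ldots, k-1\}$ and arbitrary $c_j \in \F_{q^{2n}}$, the polynomial $c_j X^{q^{js}}$ belongs to $\cD_{k,s}(\gamma)$, and a direct expansion of $\varphi_1 \circ (c_j X^{q^{js}})^\rho \circ \varphi_2$ produces a linearized polynomial supported on the pair $\{A + C + js,\ A + C + js + n\} \pmod{2n}$. The binomial hypothesis makes both coefficients nontrivial $\F_q$-linear forms in $c_j$: each takes the shape $\delta A' + \delta^{q^n} B'$ in $\delta = c_j^{\rho q^A}$, and inspection shows that at least one of $A', B'$ is nonzero in each of the two forms. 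Hence both are nonzero for generic $c_j$, so both exponents of each pair must lie in the target support $\{0, t, \ldots, kt\} \pmod{2n}$.

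The combinatorial heart of the proof is then a pair-count. Both $s$ and $t$ are odd (since $2n$ is even and $\gcd(s, 2n) = \gcd(t, 2n) = 1$), whence two elements of $\{0, t, \ldots, kt\}$ differ by $n$ modulo $2n$ exactly when their indices differ by $n$, giving $\max(0, k - n + 1)$ such pairs available in the target. On the other hand, the middle indices $j \in \{1, \ldots, k-1\}$ produce $\min(k-1, n)$ distinct $n$-separated pairs $\{A + C + js,\ A + C + js + n\}$, with identifications coming from $j \equiv j' \pmod{n}$. The resulting inequality $\min(k-1, n) \leq \max(0, k - n + 1)$, combined with the hypothesis $1 < k < 2n - 1$, singles out $(k, n) = (2, 2)$ as the only surviving configuration: for $k < n$ no target pair exists while the middle demands $k - 1 \geq 1$; for $n \leq k \leq n+1$ the inequality reduces to $n \leq 2$; and for $k \geq n + 2$ it becomes $k \geq 2n - 1$, contradicting $k \leq 2n - 2$. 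In every case but $(k, n) = (2, 2)$ the binomial assumption fails, so $\varphi_1$ and $\varphi_2$ are both monomials.

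The main obstacle is the nonvanishing step, namely that under the binomial hypothesis both coefficients of $X^{q^{A+C+js}}$ and $X^{q^{A+C+js+n}}$ are nontrivial $\F_q$-linear forms in $c_j$: without this, one could drop one exponent of each pair from the constraint and the pair-counting would become inconclusive. Once this nonvanishing is verified by the explicit expansion, the elementary combinatorics above eliminates every residual case except the genuine exception $(k, n) = (2, 2)$, which is precisely what is excluded by the hypothesis $k \neq n$ or $n \geq 3$.
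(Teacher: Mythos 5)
Your proof is correct. Its core coincides with the paper's: you use Theorem~\ref{th:N_rN_m_construction1} and the normalizer of the nucleus $\{aX: a\in\F_{q^n}\}$ to force $\varphi_1,\varphi_2$ to have supports $\{A,A+n\}$ and $\{C,C+n\}$, and you expand $\varphi_1\circ c_jX^{q^{js}}\circ\varphi_2$ exactly as in \eqref{eq:binomials}. The difference lies in how the contradiction is extracted. The paper first reduces to $k\le n$ via Delsarte duality (Proposition~\ref{prop:dual}) and then argues in two cases: for $k<n$ the target support contains no $n$-separated pair, so one coefficient must vanish identically; for $k=n$ only the pair $\{0,n\}$ is available, and $n\ge 3$ supplies a second index $i$ that avoids it. You replace both steps by a single uniform count: the $\min(k-1,n)$ distinct $n$-separated source pairs must embed into the $\max(0,k-n+1)$ such pairs contained in $\{0,t,\dots,kt\}$, and the inequality $\min(k-1,n)\le\max(0,k-n+1)$ fails for every $1<k<2n-1$ except $(k,n)=(2,2)$, which is precisely the excluded case. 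This buys a treatment of all $k$ at once, with no duality reduction and no ad hoc choice of $i$; the cost is the explicit verification that, under the binomial hypothesis, both coefficient forms $\delta A'+\delta^{q^n}B'$ are nontrivial (so that some $c_j$ makes both nonzero simultaneously), which you carry out correctly and which the paper uses implicitly when it combines the identical vanishing of one coefficient with the permutation condition to conclude $c=h=0$ or $d=g=0$, etc.
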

\begin{proof}
	By Proposition \ref{prop:dual}, $\cD_{2n-k,s}(-\gamma)$ is equivalent to  the Delsarte dual code of $\cD_{k,s}(\gamma)$. As two MRD codes are equivalent if and only if their Delsarte duals are equivalent, we only have to prove the statement for $k\le n$. 
	
	According to the definition of equivalence, $\varphi_1 \circ f^\rho \circ \varphi_2\in \cD_{k,t}(\theta)$ for every $f\in \cD_{k,s}(\gamma)$. As $f^\rho\in \cD_{k,s}(\gamma^\rho)$, $\varphi_1$ must be in the normalizer of $\mathcal N_r(\cD_{k,s}(\gamma^\rho))$ in $\GL(2n,q)$. By Theorem \ref{th:N_rN_m_construction1}, the right nucleus $\mathcal N_r(\cD_{k,s}(\gamma^\rho))=\{aX : a\in \F_{q^n}\}$. It follows that 
	$$\varphi_1=cX^{q^l} + dX^{q^{l+n}}$$ 
	for a certain $l\in \{0,\cdots, 2n-1\}$ and $c,d\in \F_{q^{2n}}$. This result is well-known and can be verified directly as follows. Assume that $\varphi_1= \sum_{i=0}^{2n-1}a_iX^{q^i}$. Then for each $b\in \F_{q^n}$, there always exists some $b'\in \F_{q^n}$ such that
	\[ \varphi_1\circ bX= \sum_{i=0}^{2n-1}a_ib^{q^i}X^{q^i}=b'X\circ \varphi_1=\sum_{i=0}^{2n-1}b'a_iX^{q^i}.\]
	This implies that if $a_i\neq 0$ then $b^{q^i}=b'$, which means that at most two coefficients $a_l$ and $a_{l+n}$ are nonzero for a certain $l$.
	
	By the same argument, we can also show that $$\varphi_2=gX^{q^j} + hX^{q^{j+n}}$$ 
	for some $j\in \{0,\cdots, 2n-1\}$ and $g,h\in \F_{q^{2n}}$.
	
	Now let us look at the image of $c_iX^{q^{is}}$ under the equivalence map $(\varphi_1, \varphi_2, \mathrm{id})$. By calculation,
	\begin{align}
	\nonumber  &\varphi_1 \circ c_iX^{q^{is}} \circ \varphi_2\\
	\nonumber =&cc_i^{q^l}(gX^{q^j} + hX^{q^{j+n}})^{q^{is+l}} + dc_i^{q^{l+n}}(gX^{q^j} + hX^{q^{j+n}})^{q^{is+l+n}}\\
	\label{eq:binomials}=& (cg^{q^{is+l}} c_i^{q^l} + dh^{q^{is+l+n}}c_i^{q^{l+n}})X^{q^{j+is+l}} \\
	&\nonumber + (ch^{q^{is+l}} c_i^{q^l} + dg^{q^{is+l+n}}c_i^{q^{l+n}}) X^{q^{j+is+l+n}}. 
	\end{align}
	
	When $k<n$, as $\varphi_1 \circ c_iX^{q^{is}} \circ \varphi_2\in \cD_{k,t}(\theta)$, one of the coefficients of $X^{q^{j+is+l}}$ and $X^{q^{j+is+l+n}}=X^{q^{j+is+l+tn}}$ must be $0$ for all $c_i\in \F_{q^{2n}}$. Together with the condition that $\varphi_1$ and $\varphi_2$ are permutation polynomials, we show that $c=h=0$ or $d=g=0$ or $c=g=0$ or $d=h=0$, which means $\varphi_1$ and $\varphi_2$ are both monomials.
	
	When $k=n$, as $\varphi_1 \circ c_iX^{q^{is}} \circ \varphi_2\in \cD_{k,t}(\theta)$, the coefficients of $X^{q^{j+is+l}}$ and $X^{q^{j+is+l+n}}=X^{q^{j+is+l+tn}}$ are both nonzero only if exact one of $j+is+l$ and $j+is+l+tn$ equals $0$. If $n\geq3$, $i$ can be taken for at least two different values from $\{1,2,\cdots,k-1\}$. Hence we can choose the value of $i$ such that $j+is+l\neq 0,n$. As in the case $k<n$, we see that $\varphi_1$ and $\varphi_2$ must be both monomials.
\end{proof}

The following lemma can be proved by using exactly the same argument and we omit its proof.
\begin{lemma}\label{lm:equivalence_map_D_H}
	Let $n,s,t\in \Z^+$ satisfying $\gcd(2n,s)=\gcd(2n,t)=1$. Let $\gamma$ and $\eta$ be in $\F^*_{q^{2n}}$ satisfying that $N(\gamma)$ is a non-square in $\F_q$ and $N(\eta)\neq 1$. Let $(\varphi_1,\varphi_2, \rho)$ be an equivalence map between $\cD_{n,s}(\gamma)$ and $\cH_{n,t}(\eta, n)$. If $n\geq3$, then $\varphi_1$ and $\varphi_2$ are both monomials.
\end{lemma}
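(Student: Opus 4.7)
The plan is to mimic the proof of Lemma \ref{lm:equivalence_map}, adapting each step to the fact that the codomain is now $\cH_{n,t}(\eta, n)$. First I would check that $\mathcal N_r(\cH_{n,t}(\eta, n)) = \{aX : a \in \F_{q^n}\}$; this is immediate from $aX \circ f = a\cdot f$ and the requirement $a\eta a_0^{q^n} = \eta(aa_0)^{q^n}$, which forces $a = a^{q^n}$, together with the cardinality $q^{\gcd(2n,n)}=q^n$ for this nucleus cited in the proof of Corollary \ref{coro:inequivalence}. Since this coincides with $\mathcal N_r(\cD_{n,s}(\gamma))$ from Theorem \ref{th:N_rN_m_construction1}, the conjugation argument from Lemma \ref{lm:equivalence_map} applies verbatim and forces $\varphi_1 = cX^{q^l} + dX^{q^{l+n}}$ for some $l \in \{0,\ldots,2n-1\}$ and $c,d \in \F_{q^{2n}}$.

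For $\varphi_2$ a little more care is needed: since $\gcd(t,2n)=1$ makes $t$ odd and hence $tn \equiv n \pmod{2n}$, every $bX$ with $b \in \F_{q^{2n}}$ lies in $\mathcal N_m(\cH_{n,t}(\eta, n))$, so this nucleus is strictly larger than $\mathcal N_m(\cD_{n,s}(\gamma)) = \{aX : a \in \F_{q^n}\}$. Nevertheless the one-sided containment $\varphi_2^{-1} \circ \mathcal N_m(\cD_{n,s}(\gamma))^\rho \circ \varphi_2 \subseteq \mathcal N_m(\cH_{n,t}(\eta, n))$ is enough. Writing $\varphi_2 = \sum_i d_i X^{q^i}$, the identity $a^\rho \varphi_2(X) = \varphi_2(bX)$ that this containment imposes, for each $a \in \F_{q^n}$ and some $b \in \F_{q^{2n}}$, gives $a^\rho d_i = d_i b^{q^i}$; two nonzero indices $i \neq i'$ then force $a^{q^{i'-i}} = a$ for every $a \in \F_{q^n}$, so $n \mid i'-i$ and $\varphi_2 = gX^{q^j} + hX^{q^{j+n}}$.

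With these forms of $\varphi_1$ and $\varphi_2$, computing $\varphi_1 \circ c_i X^{q^{is}} \circ \varphi_2$ for $1 \le i \le n-1$ produces exactly the binomial in \eqref{eq:binomials}, supported on $X^{q^A}$ and $X^{q^{A+n}}$ with $A = j+is+l$. For this binomial to lie in $\cH_{n,t}(\eta, n)$, whenever neither coefficient is identically zero as a function of $c_i$ both exponents must belong to the support set $V := \{0, t, 2t, \ldots, (n-1)t, n\} \pmod{2n}$. The decisive combinatorial claim is that $\{A \in V : A+n \in V\} = \{0, n\}$: using $nt \equiv n$, the equation $jt + n \equiv j't \pmod{2n}$ with $A=jt$ and $1 \le j \le n-1$ forces $j' - j \equiv n \pmod{2n}$, which has no solution in $\{0,1,\ldots,n\}$.

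Hence if neither binomial coefficient vanishes identically, then $A_i \in \{0, n\}$ for all $i \in \{1, \ldots, n-1\}$, which is impossible for $n \ge 3$: the differences $A_i - A_{i'} = (i-i')s$ are nonzero modulo $2n$ because $\gcd(s,2n)=1$, and either pigeonhole on three distinct indices (when $n \ge 4$) or the observation that $\pm s \not\equiv \pm 3 \pmod 6$ when $\gcd(s,6)=1$ (case $n=3$) yields a contradiction. Therefore one of the relations $cg=dh=0$ or $ch=dg=0$ must hold, and ruling out the subcases that would make $\varphi_1$ or $\varphi_2$ non-bijective leaves precisely the possibilities where both are monomials. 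The main technical obstacle is the combinatorial identity $\{A \in V : A+n \in V\} = \{0, n\}$; granted this, everything else runs in parallel with the proof of Lemma \ref{lm:equivalence_map}.
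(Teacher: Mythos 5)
Your proof is correct and is essentially the argument the paper intends: the paper omits the proof of this lemma, stating only that it is ``exactly the same argument'' as Lemma~\ref{lm:equivalence_map}, and your adaptation --- the conjugation of the nuclei to force $\varphi_1$ and $\varphi_2$ into the form $cX^{q^l}+dX^{q^{l+n}}$, the support computation via \eqref{eq:binomials}, the combinatorial claim that $A$ and $A+n$ both lie in $\{0,t,\dots,(n-1)t,n\}$ only for $A\in\{0,n\}$, and the use of two indices $i$ for $n\geq 3$ --- is exactly what that argument becomes here. Incidentally, your own correct observation that $\mathcal N_m(\cH_{n,t}(\eta,n))=\{bX: b\in\F_{q^{2n}}\}$ has size $q^{2n}$ while $\mathcal N_m(\cD_{n,s}(\gamma))$ has size $q^{n}$ already implies that no equivalence map between the two codes can exist (nucleus sizes are preserved under equivalence), so the lemma is vacuously true and the case $k=h=n$ of Theorem~\ref{th:n=k_equivalence_D_H} follows at once; this shortcut is available to you and, apparently, was not exploited by the paper either.
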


Now we can determine the equivalence between $\cD_{k,s}(\gamma)$ and $\cD_{k,t}(\theta)$.
\begin{theorem}\label{th:equivalence}
	Let $n,s,t\in \Z^+$ satisfying $\gcd(2n,s)=\gcd(2n,t)=1$. Let $\gamma$ and $\eta$ be in $\F_{q^{2n}}$ satisfying that $N(\gamma)$ and $N(\theta)$ are both non-square in $\F_q$. Let $k$ be an integer satisfying $1<k<2n-1$.
	
	When $k\neq n$ or $n\geq 3$, the MRD code $\cD_{k,s}(\gamma)$ is equivalent to  $\cD_{k,t}(\theta)$ if and only if one of the following collections of conditions are satisfied.
	\begin{enumerate}[label=(\alph*)]
		\item $s\equiv t \pmod{2n}$, there exist $\sigma \in \Aut(\F_{q^{2n}})$ and $h\in \F_{q^{2n}}$ such that $\gamma^{\sigma} h^{q^{ks}-1}=\theta$.
		\item $s\equiv -t \pmod{2n}$, there exist $\sigma \in \Aut(\F_{q^{2n}})$ and $h\in \F_{q^{2n}}$ such that $\gamma^{\sigma} h^{q^{ks}-1}=1/\theta$.
	\end{enumerate}
\end{theorem}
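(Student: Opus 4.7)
My plan is to split the proof into sufficiency (conditions (a) or (b) imply equivalence) and necessity (equivalence implies (a) or (b)), and in each direction reduce the analysis to an explicit calculation with linearized polynomials.

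For sufficiency of case (a), I will exhibit the equivalence explicitly: given $\sigma \in \Aut(\F_{q^{2n}})$ and $h \in \F_{q^{2n}}$ with $\gamma^\sigma h^{q^{ks}-1} = \theta$, I take the triple $(\varphi_1, \varphi_2, \rho) = (h^{-1} X,\, hX,\, \sigma)$ and compute, for a generic $f = aX + \sum_{i=1}^{k-1} c_i X^{q^{is}} + \gamma b X^{q^{ks}} \in \cD_{k,s}(\gamma)$,
\[
\varphi_1 \circ f^\sigma \circ \varphi_2(X) = a^\sigma X + \sum_{i=1}^{k-1} c_i^\sigma h^{q^{is}-1} X^{q^{is}} + \gamma^\sigma h^{q^{ks}-1} b^\sigma X^{q^{ks}}.
\]
The hypothesis collapses the trailing coefficient to $\theta b^\sigma$, and since $\sigma$ preserves $\F_{q^n}$, the image lies in $\cD_{k,s}(\theta) = \cD_{k,t}(\theta)$. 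For case (b) I will instead use $(\varphi_1, \varphi_2, \rho) = ((\theta/h)X,\, hX^{q^{kt}},\, \sigma)$: under $s \equiv -t\pmod{2n}$ the exponents satisfy $kt + is \equiv (k-i)t$ and $kt + ks \equiv 0$, swapping the two extreme positions, and the identity $\gamma^\sigma h^{q^{ks}-1} = 1/\theta$ normalizes the new constant-term coefficient to $b^\sigma$, producing an element of $\cD_{k,t}(\theta)$.

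For necessity, I will start by applying Lemma~\ref{lm:equivalence_map} (justified by $k\neq n$ or $n\geq 3$) to conclude that $\varphi_1 = cX^{q^l}$ and $\varphi_2 = gX^{q^j}$ are monomials. Setting $\sigma := \rho\circ(x\mapsto x^{q^l})$ (which exhausts $\Aut(\F_{q^{2n}})$ as $\rho$ and $l$ vary) and $h := g^{q^l}$, I will expand $\varphi_1 \circ f^\rho \circ \varphi_2$ for generic $f\in\cD_{k,s}(\gamma)$; the resulting polynomial is supported on the exponent set $\{q^{j+l+is}:0\le i\le k\}$ modulo $2n$, which must equal the support $\{q^{it}:0\le i\le k\}$ of $\cD_{k,t}(\theta)$. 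The combinatorial heart of the argument is to show that two arithmetic progressions of length $k+1$ in $\Z/2n\Z$ with common differences both coprime to $2n$ coincide as sets only if the differences agree up to sign, yielding either (a) $s \equiv t\pmod{2n}$ with $j+l\equiv 0$, or (b) $s\equiv -t\pmod{2n}$ with $j+l\equiv kt$. With the exponent cases settled, matching the coefficients at the two endpoints (where $\F_{q^n}$- versus $\theta\F_{q^n}$-constraints are effective, while the interior terms are unrestricted in $\F_{q^{2n}}$) and taking their ratio to eliminate $c$ produces $\gamma^\sigma h^{q^{ks}-1}/\theta\in\F_{q^n}^*$ in case (a) and $\theta\gamma^\sigma h^{q^{ks}-1}\in\F_{q^n}^*$ in case (b); a final rescaling of $h$ by an element of $\F_{q^n}^*$, compensated by a corresponding rescaling of $c$, absorbs the residual $\F_{q^n}^*$-factor and yields the strict equalities stated in the theorem.

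The hard part will be the exponent-matching argument: confirming that the only way two length-$(k+1)$ arithmetic progressions in $\Z/2n\Z$ with common differences coprime to $2n$ can coincide as sets is by orientation-preserving or orientation-reversing identification. This uses that $k+1\ge 3$ together with the endpoint-distinguishability (the coefficient conditions $a,b\in\F_{q^n}$ versus $c_i\in\F_{q^{2n}}$ force the endpoints of the source progression to land on the endpoints of the target), which also pins down the bijective correspondence between endpoints and thereby explains the appearance of $\theta$ in case (a) versus $1/\theta$ in case (b). The remaining coefficient bookkeeping, including the absorption of the $\F_{q^n}^*$-scalar, is then a routine verification.
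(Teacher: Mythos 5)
Your overall strategy coincides with the paper's: reduce to monomial $\varphi_1,\varphi_2$ via Lemma~\ref{lm:equivalence_map}, match the supports of the images of the basis monomials against $\{0,t,\dots,kt\}$, and then read off the coefficient conditions at the two endpoint positions, where the coefficients are confined to $\F_{q^n}$ and $\theta\F_{q^n}$. Your sufficiency direction is more explicit than the paper's (which dismisses it as ``routine to do a verification''), and the two explicit triples you propose do work.

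The step I would push back on is the ``combinatorial heart'' as you state it: it is \emph{not} true that two arithmetic progressions of length $k+1$ in $\Z/2n\Z$ with common differences coprime to $2n$ can coincide as sets only when the differences agree up to sign. For $k=2n-2$ such a progression is the complement of a single point, and that set is an arithmetic progression for \emph{every} unit difference: in $\Z/8\Z$, for instance, $\{0,1,\dots,6\}=\{2,5,0,3,6,1,4\}$ is an AP of common difference $3\not\equiv\pm1\pmod 8$. So the lemma you single out as the hard part does not hold in the generality in which you state it. The paper sidesteps this entirely by first passing to the Delsarte dual (Proposition~\ref{prop:dual}), which reduces the whole proof to $k\le n$, where the progressions are short enough for the standard pair-counting argument to force $s\equiv\pm t$. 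If you want to skip that reduction, you must lead with the endpoint-distinguishability you mention only in passing --- the positions carrying $a$ and $\gamma b$ must land on $\{0,kt\}$ because their coefficient sets have size $q^n<q^{2n}$ --- and then run the progression argument only on the interior positions $\{is+l+j:1\le i\le k-1\}=\{t,\dots,(k-1)t\}$, a progression of length $k-1\le 2n-3$ for which the counting does work (the endpoints alone settle the case $k=2$). One further caution: your closing ``absorption'' step, rescaling $h$ by $\lambda\in\F_{q^n}^*$ to upgrade $\gamma^\sigma h^{q^{ks}-1}\in\theta\F_{q^n}^*$ to an exact equality, is not justified as written, since $\lambda\mapsto\lambda^{q^{ks}-1}$ is far from surjective onto $\F_{q^n}^*$; the paper's own proof is equally terse at exactly this point (``we see that there must be a solution of $h$''), so this is not a divergence from its argument, but it is the place where the details most need to be written out.
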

\begin{proof}
	As in the proof of Lemma \ref{lm:equivalence_map}, we only have to handle the cases $k\leq n$.
	
	Assume that $(\varphi_1, \varphi_2, \rho)$ is an equivalence map between $\cD_{k,s}(\gamma)$ and $\cD_{k,t}(\theta)$.
	
	When $k\neq n$ or $n\geq 3$, by Lemma \ref{lm:equivalence_map}, we can assume that $\varphi_1=dX^{q^l}$ and $\varphi_2 = g X^{q^j}$ for some $d, g\in \F_{q^n}^*$ and $l,j\in \{0,\cdots, 2n-1\}$.
	
	For arbitrary $c_i\in \F_{q^{2n}}$ with $i\in \{1,\cdots, k-1 \}$, 
	\[  \varphi_1\circ c_iX^{q^{is}} \circ \varphi_2 = dc_i^{q^l} g^{q^{is+l}} X^{q^{is+l+j}}.\]
	It follows that $is+l+j\in \{t,2t, \cdots, (k-1)t\}$. As $\gcd(2n,s)=\gcd(2n,t)=1$, we can assume that $s\equiv rt \pmod{2n}$ which means
	\[ \{irt+l+j \pmod{2n}: i =1,\cdots, k-1 \}= \{t,2t, \cdots, (k-1)t\}. \]
	As $k\le n$, it is straightforward to see that either $r=1$ and $l+j\equiv 0 \pmod{2n}$, or $r=-1$ and $l+j\equiv kt \pmod{2n}$.
	
	When $r=1$, i.e.\ $s\equiv t\pmod{2n}$ and $j\equiv -l \pmod{2n}$,  for $a,b\in \F_{q^n}$, applying $(\varphi_1, \varphi_2, \rho)$ onto $aX+ \gamma bX^{q^{ks}}$, we obtain
	\begin{eqnarray*}
	\lefteqn{\varphi_1\circ \left(a^\rho X+ \gamma^\rho b^\rho X^{q^{ks}} \right)\circ \varphi_2 }\\
	&=& da^{\rho q^l}g^{q^l}X+ d\gamma^{\rho q^l}b^{\rho q^l}g^{q^{ks+l}}X^{q^{ks}},
	\end{eqnarray*}
	which belongs to $\cD_{k,t}(\theta)$ if and only if $dg^{q^l}\in \F_{q^n}$ and $d\gamma^{\rho q^l}g^{q^{ks+l}}\in \theta \F_{q^n}$. Let $\sigma$ denote the automorphism of $\F_{q^n}$ defined by $x\mapsto x^{\rho q^l}$. Let $h = g^{q^l}$. Then we see that there must be a solution of $h$ such that $\gamma^{\sigma} h^{q^{ks}-1}=\theta$.
	
	When $r=-1$, i.e.\ $s\equiv -t\pmod{2n}$ and $j\equiv kt-l \pmod{2n}$, for $a,b\in \F_{q^n}$, we apply $(\varphi_1, \varphi_2, \rho)$ onto $aX+ \gamma bX^{q^{ks}}$ and get
	\begin{eqnarray*}
	\lefteqn{\varphi_1\circ \left(a^\rho X+ \gamma^\rho b^\rho X^{q^{ks}} \right)\circ \varphi_2}\\
	&=& da^{\rho q^l}g^{q^l}X^{q^{kt}}+ d\gamma^{\rho q^l}b^{\rho q^l}g^{q^{ks+l}}X,
	\end{eqnarray*}
	which belongs to $\cD_{k,t}(\theta)$ if and only if $dg^{q^l}\in\theta \F_{q^n}$ and $d\gamma^{\rho q^l}g^{q^{ks+l}}\in \F_{q^n}$. Let $\sigma$ denote the automorphism of $\F_{q^n}$ defined by $x\mapsto x^{\rho q^l}$. Let $h = g^{q^l}$. Then we see that there must be a solution of $h$ such that $\gamma^{\sigma} h^{q^{ks}-1}=1/\theta$.
	
	Therefore we have proved the necessary condition in the statement for $k\neq n$ or $n\geq3$. For sufficiency, it is routine to do a  verification.
\end{proof}

There are 3 cases which are not covered by Theorem \ref{th:equivalence}: $k=1$, $k=2n-1$ and $k=n=2$. 
For $k=1$, $\cD_{1,s}(\gamma)$ defines a Hughes-Kleinfeld semifield whose autotopism group has been completely determined in \cite{hughes_collineation_1960}. It appears that using the same approach, the equivalence between $\cD_{1,s}(\gamma)$ and $\cD_{1,t}(\theta)$ can also be determined. Hence, in the rest of this section, we will skip the case $k=1$. Moreover, for $k=2n-1$, the MRD code is the Delsarte dual code of a Hughes-Kleinfeld semifield by Proposition \ref{prop:dual}. We will also skip this case, because the equivalence problem for this case can be completely converted into the equivalence problem for Hughes-Kleinfeld semifields.

Next we investigate the last case in which $k=n=2$. 
As $\gcd(2n,s)=\gcd(2n,t)=1$ and $n=2$, $t\equiv \pm s \pmod{2n}$. In fact, $t$ and  $s$ can only be $1$ or $-1$ modulo $2n$.
\begin{theorem}\label{th:equivalence_n=k}
	Let $s,t\in \Z^+$ satisfying $\gcd(4,s)=\gcd(4,t)=1$. Let $\gamma$ and $\theta$ be in $\F_{q^{4}}$ satisfying that $N_{q^{4}/q}(\gamma)$ and $N_{q^{4}/q}(\theta)$ are both non-square in $\F_q$. 
	
	The MRD code $\cD_{2,s}(\gamma)$ is equivalent to  $\cD_{2,t}(\theta)$ if and only if one of the following collections of conditions are satisfied.
	\begin{enumerate}[label=(\alph*)]
		\item $s\equiv t \pmod{4}$, there exists $\sigma \in \Aut(\F_{q^{4}})$ and $h\in \F_{q^{4}}$ such that $\gamma^{\sigma} h^{q^{2s}-1}=\theta$.
		\item $s\equiv t \pmod{4}$, there exist $c,d,g,h\in\F_{q^4}$, $\rho\in \Aut(\F_q)$ and $l\in \{0,1,2,3\}$ such that
		\[ \left\{
		\begin{array}{rcl}
		cg^{q^{s+l}}-d^{q^2}h^{q^{s+l}}&=&0,	\\ 
		ch^{q^{s+l}}\theta^{q^2}-d^{q^2}g^{q^{s+l}}\theta&=&0,    \\ 
		cg^{q^l}+ dh^{q^{l+2}}&=&0,\\ 
		ch^{q^{2s+l}}\gamma^{\rho q^l}+ dg^{q^{l}}\gamma^{\rho q^{l+2}}&=&0.
		\end{array} 
		\right.  \]
		\item $s\equiv -t \pmod{4}$, there exists $\sigma \in \Aut(\F_{q^{4}})$ and $h\in \F_{q^{4}}$ such that $\gamma^{\sigma} h^{q^{2s}-1}=1/\theta$.
		\item $s\equiv -t \pmod{4}$, there exist $c,d,g,h\in\F_{q^4}$, $\rho\in \Aut(\F_q)$ and $l\in \{0,1,2,3\}$ such that
		\[ \left\{
		\begin{array}{rcl}
		cg^{q^{s+l}}-d^{q^2}h^{q^{s+l}}&=&0,	\\ 
		ch^{q^{s+l}}\theta^{q^2}-d^{q^2}g^{q^{s+l}}\theta&=&0,    \\ 
		ch^{q^l}+ dg^{q^{l+2}}&=&0,\\ 
		cg^{q^{2s+l}}\gamma^{\rho q^l}+ dh^{q^{l}}\gamma^{\rho q^{l+2}}&=&0.
		\end{array} 
		\right.  \]
	\end{enumerate}
\end{theorem}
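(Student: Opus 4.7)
The plan is to follow the strategy of the proof of Theorem~\ref{th:equivalence}, keeping the only genuinely new feature required by the excluded case $k=n=2$: the possibility of genuinely binomial $\varphi_1$ and $\varphi_2$. I would begin by noting that Theorem~\ref{th:N_rN_m_construction1} and the normalizer argument at the beginning of the proof of Lemma~\ref{lm:equivalence_map} still apply and force
\[
\varphi_1 = cX^{q^l}+dX^{q^{l+2}},\qquad \varphi_2 = gX^{q^j}+hX^{q^{j+2}}
\]
for any equivalence $(\varphi_1,\varphi_2,\rho)$ between $\cD_{2,s}(\gamma)$ and $\cD_{2,t}(\theta)$, with $l,j\in\{0,1,2,3\}$ and $c,d,g,h\in\F_{q^4}$. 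The second step of Lemma~\ref{lm:equivalence_map}, which killed the binomial terms, needed at least two free inner indices $i\in\{1,\ldots,k-1\}$; with $k=n=2$ only $i=1$ remains, so the binomial case must be kept.

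The first substantive step is to push $c_1X^{q^s}$ through the equivalence via \eqref{eq:binomials}, obtaining a binomial supported at exponents $j+s+l$ and $j+s+l+2$ modulo $4$. Since $\gcd(4,t)=1$ forces $2t\equiv 2\pmod 4$, the active exponents of $\cD_{2,t}(\theta)$ are $\{0,t,2\}$, and only two configurations of $\{j+s+l,j+s+l+2\}$ are compatible with the target code. If one exponent equals $t$ and the other equals $-t\pmod 4$, then the coefficient at $-t$ must vanish for every $c_1\in\F_{q^4}$, which forces one of $\{c,d\}$ and one of $\{g,h\}$ to be zero; both $\varphi_1$ and $\varphi_2$ then collapse to monomials, and the argument of Theorem~\ref{th:equivalence} yields conditions (a) or (c) according to whether $s\equiv t$ or $s\equiv -t\pmod 4$. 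In the remaining configuration $\{j+s+l,j+s+l+2\}=\{0,2\}\pmod 4$, the two coefficients of the image are $\F_q$-linear maps in $c_1$ whose images must lie in $\F_{q^2}$ and $\theta\F_{q^2}$ respectively; raising each such identity to the power $q^2$ and using $c_1^{q^4}=c_1$ yields the first two equations that appear in both (b) and (d).

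I would then impose $\varphi_1\circ(a^\rho X+\gamma^\rho b^\rho X^{q^{2s}})\circ\varphi_2\in\cD_{2,t}(\theta)$ for all $a,b\in\F_{q^n}$. A direct expansion produces contributions at the four exponents $j+l$, $j+l+2$, $j+2s+l$, $j+2s+l+2$ modulo $4$; in the binomial configuration of the previous step, and using $2s\equiv 2\pmod 4$, these four exponents split into two pairs landing on $\{t,-t\}$, the split dictated by whether $s\equiv t$ or $s\equiv -t\pmod 4$. Requiring the coefficients at the forbidden position $-t$ to vanish for every $a,b\in\F_{q^2}$, and then separating the linear contributions in $a$ and $b$, delivers precisely the third and fourth equations of case (b) when $s\equiv t\pmod 4$, and of case (d) when $s\equiv -t\pmod 4$.

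The main obstacle I foresee is bookkeeping: the sub-configuration of the image of $c_1X^{q^s}$, the residue $s\equiv\pm t\pmod 4$, and the parity of $l$ must be tracked simultaneously, and every degenerate branch has to be checked to collapse cleanly into either (a)/(c) or (b)/(d) so that nothing is missed. Sufficiency is a direct termwise verification: given data satisfying one of the four systems one reconstructs a tuple $(\varphi_1,\varphi_2,\rho)$ and checks that $\varphi_1\circ f^\rho\circ\varphi_2\in\cD_{2,t}(\theta)$ for each generator of $\cD_{2,s}(\gamma)$.
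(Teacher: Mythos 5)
Your proposal is correct and follows essentially the same route as the paper: restrict $\varphi_1,\varphi_2$ to at most binomials supported on exponents differing by $n=2$ via the nuclei, split according to whether the image of $c_1X^{q^s}$ lands on $\{t,-t\}$ (forcing monomials and hence cases (a)/(c) by the argument of Theorem~\ref{th:equivalence}) or on $\{0,2\}$ (yielding the first two equations of (b)/(d) from membership in $\F_{q^2}$ and $\theta\F_{q^2}$), and then extract the remaining two equations from the vanishing of the coefficient at the forbidden odd exponent in the image of $aX+\gamma^\rho bX^{q^{2s}}$. The bookkeeping you flag (the sub-configuration $j+s+l\equiv 0$ versus $2$, which swaps $g$ and $h$) is exactly how the paper organizes the two sub-cases, and sufficiency is likewise handled by direct verification.
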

\begin{proof}
	In this proof, we will still write $n$ instead of $2$ in some equations even though we have assumed that $n=2$.
	
	If $cX^{q^s}\in \cD_{2,s}(\gamma)$ is always mapped to another monomial for all $c\in \F_{q^{2n}}$, then the same calculation in Theorem \ref{th:equivalence} shows the necessary and sufficient conditions (a) and (c).
	
	In the rest of the proof, we always assume that $cX^{q^s}\in \cD_{2,s}(\gamma)$ is mapped to a binomial for some $c$. 
	Taking $i=1$ in \eqref{eq:binomials}, we see that $j+s+l$ can be taken for exact two possible value:  $j+s+l\equiv 0 \pmod{2n}$ or $j+s+l\equiv n \pmod{2n}$. 
	
	Let us consider the case $s\equiv t \pmod{2n}$. 
	First we assume that $j+s+l\equiv 0 \pmod{2n}$.
	From $\varphi_1\circ c_1X^{q^{s}} \circ \varphi_2\in\cD_{2,t}(\theta)$ and \eqref{eq:binomials}, we derive that the coefficient of $X^{q^{j+s+l}}$ belongs to $\F_{q^n}$ and the coefficient of $X^{q^{j+s+l+n}}$ belongs to $\theta\F_{q^n}$, which means that
	\begin{eqnarray}
	\label{eq:k=n=2_qs_1}\lefteqn{cg^{q^{s+l}} c_1^{q^l} + dh^{q^{s+l+n}}c_1^{q^{l+n}}} \\
	\nonumber	& = & c^{q^n}g^{q^{s+l+n}} c_1^{q^{l+n}} + d^{q^n}h^{q^{s+l}}c_1^{q^{l}}
	\end{eqnarray}
	and
	\begin{eqnarray}
	\label{eq:k=n=2_qs_2}	\lefteqn{(ch^{q^{s+l}}c_1^{q^l} + d g^{q^{s+l+n}} c_1^{q^{l+n}})\theta^{q^n}}\\
	\nonumber & = &(c^{q^n}h^{q^{s+l+n}}c_1^{q^{l+n}} + d^{q^n} g^{q^{s+l}} c_1^{q^{l}})\theta
	\end{eqnarray}
	hold for every $c_1\in\F_{q^{2n}}$. If we view \eqref{eq:k=n=2_qs_1} as a polynomial of $c_1$, by comparing the coefficients of $c_1^{q^l}$ (or those of $c_1^{q^{l+n}}$) in it, we obtain
	\begin{equation}\label{eq:cgdh1}
	cg^{q^{s+l}}=d^{q^n}h^{q^{s+l}}.
	\end{equation}
	Similarly, from \eqref{eq:k=n=2_qs_2} we derive
	\begin{equation}\label{eq:chdg1}
	ch^{q^{s+l}}\theta^{q^n} = d^{q^n}g^{q^{s+l}}\theta.
	\end{equation}
	
	Furthermore, from $\varphi_1\circ aX\circ \varphi_2\in \cD_{2,t}(\theta)$ with $a\in \F_{q^n}$ we can derive more conditions. As the coefficient of $X^{q^{j+l}}=X^{q^{2n-s}}$ in it must be zero, by plugging $a=c_i$ and $i=0$ into \eqref{eq:binomials}, we get
	\begin{equation}\label{eq:cgdh2}
	cg^{q^l}+ dh^{q^{l+n}}=0.
	\end{equation}
	
	Analogously, by checking the coefficient of $X^{q^{j+2s+l+n}}=X^{q^{j+l}}=X^{q^{3s}}$ in $\varphi_1\circ \gamma^\rho bX^{q^{2s}}\circ \varphi_2\in \cD_{2,t}(\theta)$ with $b\in \F_{q^n}$, we obtain
	\begin{equation}\label{eq:chdg2}
	ch^{q^{2s+l}}\gamma^{\rho q^l}+ dg^{q^{l}}\gamma^{\rho q^{l+n}}=0.
	\end{equation}

	For $j+s+l\equiv n \pmod{2n}$, the proof is similar. By checking the coefficients of $X^{q^{j+s+l}}=X^{q^n}$ and $X^{q^{j+s+l+n}}=X$ in $\varphi_1\circ c_1X^{q^{s}} \circ \varphi_2\in\cD_{2,t}(\theta)$, we obtain
	\begin{align}
	\label{eq:-cgdh1} ch^{q^{s+l}}&=d^{q^n}g^{q^{s+l}},\\
	\label{eq:-chdg1} cg^{q^{s+l}}\theta^{q^n} &= d^{q^n}h^{q^{s+l}}\theta.	
	\end{align}
	
	Furthermore, as the coefficient of $X^{q^{j+l+n}}=X^{q^{3s}}$ in $\varphi_1\circ aX\circ \varphi_2\in \cD_{2,t}(\theta)$ must be $0$ for every $a\in \F_{q^n}$,
	\begin{equation}\label{eq:-cgdh2}
	ch^{q^l}+ dg^{q^{l+n}}=0.
	\end{equation}
	By checking the coefficient of $X^{q^{j+2s+l}}=X^{q^{3s}}$ in $\varphi_1\circ \gamma^\rho bX^{q^{2s}}\circ \varphi_2\in \cD_{2,t}(\theta)$ with $b\in \F_{q^n}$, we get
	\begin{equation}\label{eq:-chdg2}
	cg^{q^{2s+l}}\gamma^{\rho q^l}+ dh^{q^{l}}\gamma^{\rho q^{l+n}}=0.
	\end{equation}
	Hence, \eqref{eq:-cgdh1}, \eqref{eq:-cgdh2}, \eqref{eq:-chdg1} and \eqref{eq:-chdg2} can be simply obtained by switching of $g$ and $h$ in \eqref{eq:cgdh1}, \eqref{eq:cgdh2}, \eqref{eq:chdg1} and \eqref{eq:chdg2}, respectively. We finish the proof of the necessity part of (b).
	
	After a careful check of the previous calculations, we can see that if $c$, $d$, $g$, $h$, $\rho$ and $l$ satisfy \eqref{eq:cgdh1}, \eqref{eq:chdg1}, \eqref{eq:cgdh2} and \eqref{eq:chdg2} simultaneously, then the map $(\varphi_1, \varphi_2, \rho)$ is indeed an equivalence map between $\cD_{2,s}(\gamma)$ and $\cD_{2,t}(\theta)$. Therefore the condition (b) is also sufficient.
	
	For the case (d) in which $s\equiv -t \pmod{4}$, the proof is the same. For $j+s+l\equiv 0\pmod{2n}$, we can also get the same equations \eqref{eq:cgdh1} and \eqref{eq:chdg1}. However, now $g$ and $h$ are switched in \eqref{eq:cgdh2} and \eqref{eq:chdg2}. We omit the details of these calculations.
\end{proof}
\begin{remark}
	It is possible that the conditions (b) and (d) hold. For instance, let $q=3$, $s=1$ and $\gamma=\theta=\omega$ which is a root of $X^4+2X^3+2\in \F_{q}[X]$. Taking $l=0$, $c=1$, $d=\omega^{36}$, $g=\omega^2$, $h=\omega^{54}$ and $\rho=\mathrm{id}$, we get an equivalence map from $\cD_{2,1}(\gamma)$ to itself.
\end{remark}
\begin{remark}
	By Theorem \ref{th:equivalence} (a) and Theorem \ref{th:equivalence_n=k} (a) (b), the automorphism group of an MRD code $\cD_{k,s}(\gamma)$ can also be determined.
\end{remark}

Recall that in Corollary \ref{coro:inequivalence}, the equivalence between $\cD_{n,s}(\gamma)$ and $\cH_{n,t}(\eta, n)$ is the unique open case. Finally we will solve this problem by using the same approach which was used in the proofs of Theorems \ref{th:equivalence} and \ref{th:equivalence_n=k}.
\begin{theorem}\label{th:n=k_equivalence_D_H}
	Let $n,s,t\in \Z^+$ satisfying $\gcd(2n,s)=\gcd(2n,t)=1$. Let $\gamma$ and $\eta$ be in $\F^*_{q^{2n}}$ satisfying that $N(\gamma)$ is a non-square in $\F_q$ and $N(\eta)\neq 1$. Then $\cD_{k,s}(\gamma)$ and $\cH_{k,t}(\eta, h)$ are not equivalent for all $k$ and $h$.
\end{theorem}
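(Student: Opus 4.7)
The plan is to focus on the only open case, $k=h=n$; for all other values of $(k,h)$, Corollary \ref{coro:inequivalence} already rules out equivalence. Assume toward contradiction that $(\varphi_1, \varphi_2, \rho)$ realises an equivalence from $\cD_{n,s}(\gamma)$ to $\cH_{n,t}(\eta, n)$.

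Treat first $n\geq 3$. By Lemma \ref{lm:equivalence_map_D_H} one may assume $\varphi_1=dX^{q^l}$ and $\varphi_2=gX^{q^j}$ are monomials, so the image of $X^{q^{is}}$ is a monomial at exponent $is+l+j\pmod{2n}$. Forcing $\{is+l+j:0\le i\le n\}$ to sit inside the support $\{it:0\le i\le n\}$ of $\cH_{n,t}(\eta,n)$ leaves only two possibilities: either $s\equiv t\pmod{2n}$ with $l+j\equiv 0\pmod{2n}$, or $s\equiv -t\pmod{2n}$ with $l+j\equiv nt\pmod{2n}$. In the first subcase, apply the equivalence to $f=aX+\gamma bX^{q^{ns}}$ with $a,b\in\F_{q^n}$ (intermediate $c_i=0$): the image has $a_0=da^{\rho q^l}g^{q^l}$ at exponent $0$ and $a_n=d\gamma^{\rho q^l}b^{\rho q^l}g^{q^{l+n}}$ at exponent $n$. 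The defining identity $a_n=\eta a_0^{q^n}$ simplifies, using $a^{q^n}=a$, to
\[\gamma^{\rho q^l}\,b^{\rho q^l}=\eta\,d^{q^n-1}\,a^{\rho q^l},\]
which must hold for every $a,b\in\F_{q^n}$ independently; taking $a=1$, $b=0$ yields $\eta d^{q^n-1}=0$, a contradiction since $\eta, d\neq 0$. The second subcase is symmetric after swapping the roles of $a$ and $b$.

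For $n=2$, Lemma \ref{lm:equivalence_map_D_H} is silent, so $\varphi_1$ and $\varphi_2$ may each be genuine binomials $cX^{q^l}+dX^{q^{l+2}}$ and $gX^{q^j}+hX^{q^{j+2}}$. The purely monomial subcase is handled as above, and any degeneracy ($g=0$, $h=0$, etc.) collapses to that subcase. In the genuinely binomial case, I expand $\varphi_1\circ c_1X^{q^s}\circ\varphi_2$ exactly as in \eqref{eq:binomials}; after matching supports to $\{0,2t\}=\{0,2\}$, the twist relation $a_{2t}=\eta a_0^{q^2}$ together with the comparison of the coefficients of $c_1^{q^l}$ and $c_1^{q^{l+2}}$ force
\[c=\eta d^{q^2}\quad\text{and}\quad d=\eta c^{q^2}.\]
Substituting gives $c=\eta^{1+q^2}c$; either $c=d=0$ (impossible), or $\eta^{1+q^2}=N_{q^4/q^2}(\eta)=1$, which by transitivity of the norm implies $N(\eta)=N_{q^2/q}(1)=1$, contradicting the hypothesis $N(\eta)\neq 1$.

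The main obstacle is the $n=2$ binomial subcase, where one must carefully track which coefficients of the image land on the distinguished positions $0$ and $nt$ of $\cH_{n,t}(\eta,n)$ — linked by the twist $a_n=\eta a_0^{q^n}$ — as opposed to the free intermediate positions; the argument otherwise runs in parallel with the proof of Theorem \ref{th:equivalence_n=k}, the difference being that the twist in $\cH_{n,t}(\eta,n)$ delivers the final norm contradiction rather than the parameters of a second $\cD$-code.
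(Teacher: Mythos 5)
Your proposal follows the paper's proof essentially step for step: reduce to $k=h=n$ via Corollary~\ref{coro:inequivalence}; for $n\ge 3$ use Lemma~\ref{lm:equivalence_map_D_H} to make both maps monomials, pin down $r=\pm1$ and $l+j$ from the supports, and then contradict the twist relation $a_{nt}=\eta a_0^{q^n}$ by setting $b=0$ (the paper does the same thing phrased as applying the map to $aX$ alone); for $n=2$ expand \eqref{eq:binomials}, match supports to $\{0,2\}$, and compare the coefficients of $c_1^{q^l}$ and $c_1^{q^{l+2}}$ to force $c=\eta d^{q^2}$ and $d=\eta c^{q^2}$, whence $\eta^{1+q^2}=1$ and $N(\eta)=1$ by transitivity of the norm. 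All of these computations check out and coincide with the paper's.

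The one assertion that is not right as written is that for $n=2$ ``any degeneracy ($g=0$, $h=0$, etc.) collapses to'' the monomial subcase. If, say, $h=0$ while $c,d\neq 0$, then $\varphi_2$ is a monomial but $\varphi_1$ is still a genuine binomial; this mixed configuration is covered neither by your monomial argument (which assumes $\varphi_1=dX^{q^l}$) nor by your four-nonzero-coefficients computation (which divides by $g$ and $h$). The paper's write-up has the same lacuna: it refutes only the case where all of $c,d,g,h$ are nonzero before declaring both maps monomial. The repair is short: in the mixed case the image of $aX$ forces $j+l\in\{0,2\}\pmod 4$, so the image of $c_1X^{q^s}$ is a genuine binomial supported on the two odd residues $\{1,3\}$, only one of which lies in the support $\{0,t,2t\}$ of $\cH_{2,t}(\eta,2)$; hence one of its two coefficients must vanish for all $c_1$, contradicting $c,d,g\neq 0$. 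You should add a line to this effect rather than asserting the collapse.
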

\begin{proof}
	By Corollary \ref{coro:inequivalence}, we only have to face the case $k=n=h$. Assume that $(\varphi_1, \varphi_2,\rho)$ defines an equivalence map from $\cD_{n,s}(\gamma)$ to $\cH_{n,t}(\eta, n)$. As we are going to show that such a map never exist, without loss of generality, we assume that $\rho=\mathrm{id}$; otherwise we consider the equivalence map from $\cD_{n,s}(\gamma^\rho)$ to $\cH_{n,t}(\eta, n)$.
	
	We separate our proof into two parts depending on the value of $n$.
	
	(a) When $n\geq 3$, the proof is quite similar to that for Theorem \ref{th:equivalence}. By Lemma \ref{lm:equivalence_map_D_H}, we can assume that $\varphi_1=dX^{q^l}$ and $\varphi_2 = g X^{q^j}$ for some $d, g\in \F_{q^n}^*$ and $l,j\in \{0,\cdots, 2n-1\}$.
	
	For arbitrary $c_i\in \F_{q^{2n}}$ with $i\in \{1,\cdots, n-1 \}$, 
	\[  \varphi_1\circ c_iX^{q^{is}} \circ \varphi_2 = dc_i^{q^l} g^{q^{is+l}} X^{q^{is+l+j}},\]
	which should belong to $\cH_{n,t}(\eta, n)$.	It follows that $is+l+j\in \{t,2t, \cdots, (n-1)t\}$. As $\gcd(2n,s)=\gcd(2n,t)=1$, we can assume that $s=rt$ whence
	\[ \{irt+l+j : i =1,2,\cdots, n-1 \}= \{t,2t, \cdots, (n-1)t\}. \]
	It is straightforward to see that $r=1$ and $l+j\equiv 0 \pmod{2n}$, or $r=-1$ and $l+j\equiv nt \pmod{2n}$.
	
	No matter $r=1$ or $-1$, applying $(\varphi_1, \varphi_2, \mathrm{id})$ to $aX$, we see that one of the coefficients of $X$ and $X^{q^{kt}}$ is zero and the other one is a function of $a$. This contradicts the assumption $\varphi_1\circ aX\circ \varphi_2\in \cH_{n,t}(\eta, n)$ for every $a\in \F_{q^n}$.
	
	(b) When $n=2$, it is clear that $s$ and $t$ are congruent to $\pm 1$ modulo $2n$. In fact, it is sufficient to consider the case $t=s$, because $\cH_{n,-s}(\eta, n)$ is equivalent to $\cH_{n,s}(1/\eta^{q^{2s}},n)$.
	
	As the middle and right nuclei of $\cD_{n,s}(\gamma)$ to $\cH_{n,t}(\eta, n)$ are $\F_{q^n}$, we can assume that $\varphi_1=cX^{q^l} + dX^{q^{l+n}}$ and $\varphi_2=gX^{q^j} + hX^{q^{j+n}}$. Our first goal is to show that $\varphi_1$ and $\varphi_2$ must be monomials.
	
	Assume, by way of contradiction, that $c,d,g,h$ are all nonzero. Plugging $i=1$ and $c_i=w$ into \eqref{eq:binomials}, we get
	\begin{align}
	\label{eq:binomials-final} \varphi_1\circ wX^{q^s} \circ \varphi_2 = &(cg^{q^{s+l}} w^{q^l} + dh^{q^{s+l+n}}w^{q^{l+n}})X^{q^{j+s+l}}\\ 
	\nonumber		& + (ch^{q^{s+l}} w^{q^l} + dg^{q^{s+l+n}}w^{q^{l+n}}) X^{q^{j+s+l+n}},
	\end{align}
	which should belong to $\cH_{n,s}(\eta, n)$ for all $w\in \F_{q^{2n}}$.
	As in the proof of Theorem \ref{th:equivalence_n=k}, we see that $j+s+l$ can only take two possible value: $j+s+l\equiv 0 \pmod{2n}$ or $j+s+l\equiv n \pmod{2n}$.
	
	If $j+s+l\equiv 0 \pmod{2n}$, from 
	$\varphi_1\circ wX^{q^s} \circ \varphi_2\in \cH_{n,t}(\eta, n)$, we derive
	\[\eta(cg^{q^{s+l}} w^{q^l} + dh^{q^{s+l+n}}w^{q^{l+n}})^{q^n} = ch^{q^{s+l}} w^{q^l} + dg^{q^{s+l+n}}w^{q^{l+n}}\]
	for every $w\in \F_{q^{2n}}$, which means
	\[\left\{
	\begin{array}{l}
	\eta c^{q^{n}}g^{q^{3s+l}} = dg^{q^{3s+l}},\\
	\eta d^{q^n}h^{q^{s+l}}= ch^{q^{s+l}}.
	\end{array} 
	\right. \]
	As we have assumed that $g$ and $h$ are both nonzero, the two above equations implies that $\eta c^{q^n}=d$ and $\eta d^{q^n}=c$. Hence $\eta^{q^n+1}=1$, which implies that $N(\eta)= \eta \eta^q\eta^{q^2}\eta^{q^3}= (\eta \eta^{q^2})(\eta \eta^{q^2})^q=1$ contradicting the assumption that $N(\eta)\neq 1$.
	
	For $j+s+l\equiv n\pmod{2n}$, the proof is analogous and we omit it.
	
	Therefore we have proved that $\varphi_1=dX^{q^l}$ and $\varphi_2 = g X^{q^j}$ for some $d, g\in \F_{q^{2n}}^*$ and $l,j\in \{0,\cdots, 3\}$. As the case $n\ge 3$ proved in part (a), it is routine to expand $\varphi_1 \circ aX \circ \varphi_2$ and to check that it cannot belong to $\cH_{n,t}(\eta, n)$. Hence there is no equivalence map from $\cD_{n,s}(\gamma)$ to $\cH_{n,t}(\eta, n)$.
\end{proof}


\ifCLASSOPTIONcaptionsoff
  \newpage
\fi



%

%


\begin{IEEEbiographynophoto}{Rocco Trombetti}
was born in Caserta (Italy) in 1975. He received the Degree in Mathematics in 1997 from the University of Campania “Luigi Vanvitelli”, and the Ph.D in Mathematics in 2004 from the University of Naples ``Federico II", where he is currently a Professor. His research interests are in combinatorics, with particular regard to finite geometry. He obtained results, in collaboration also with Italian and foreign researchers, on the following topics: spreads and ovoids of polar spaces, semifields, non-associative algebras and associated geometric structures, MRD-codes.
\end{IEEEbiographynophoto}

\begin{IEEEbiographynophoto}{Yue Zhou}
was born in Taiyuan, China in 1984. He received the B.S.\ degree and M.S.\ degree in mathematics from the National University of Defense Technology in Changsha, China, in 2006 and 2009, and the PhD degree in mathematics from the Otto-von-Guericke University Magdeburg, Germany in 2013. Currently he is a lecturer at the National University of Defense Technology in Changsha, China. His research interests include finite geometries, combinatorial design theory, finite fields, codes and sequences.
\end{IEEEbiographynophoto}





\end{document}